\newcommand{\R}{\mathbb{R}}
\newcommand{\ud}{\mathrm{d}}
\newcommand{\half}{{\textstyle{1\over2}}}
\newtheorem{thm}{Theorem}[section]
\newtheorem{definition}[thm]{Definition}
\newtheorem{lem}[thm]{Lemma}
\newtheorem{pro}[thm]{Proposition}
\theoremstyle{remark}
\newtheorem*{rem}{Remark}
\newlist{steps}{enumerate}{1}
\setlist[steps, 1]{label = Step \arabic*:}
\newcommand*{\bydef}{\overset{\rm def}{=}}
\newcommand*{\norm}[1]{\left\Vert #1\right\Vert}
\newcommand{\eqdef}{\stackrel{\text{\tiny{def}}}{=}}
\title[Regularized scalar conservation law]{\bf Convergence rate for a regularized scalar conservation law}
\author[Guelmame and Houamed]{Billel Guelmame and Haroune Houamed}
\newcommand{\nfont}{\fontshape{n}\selectfont}
\address{({\nfont\textbf{Billel Guelmame}})  UMPA, CNRS,  ENS Lyon, Universit\'e de Lyon, France.} 
\email{billel.guelmame@ens-lyon.fr}
\address{({\nfont\textbf{Haroune Houamed}}) New York University Abu Dhabi, Abu Dhabi, United Arab Emirates.} 
\email{haroune.houamed@nyu.edu}
\let\oldtocsection=\tocsection
\let\oldtocsubsection=\tocsubsection
\renewcommand{\tocsection}[2]{\hspace{0em}\oldtocsection {#1}{#2}}
\renewcommand{\tocsubsection}[2]{\hspace{2em}\oldtocsubsection{#1}{#2}}
\numberwithin{equation}{section}
\begin{document}
\maketitle

\begin{center}
    \itshape \small  In the memory of Ahmed Blidia
\end{center}

\begin{abstract} 
	This work revisits a recent finding by the first author concerning the local convergence of a regularized scalar conservation law. We significantly improve the original statement by establishing a global convergence result within the Lebesgue spaces $L^\infty_{\mathrm{loc}}(\mathbb{R}^+;L^p(\mathbb{R}))$, for any $p \in [1,\infty)$, as the regularization parameter $\ell$ approaches zero. Notably, we demonstrate that this stability result is accompanied by a quantifiable rate of convergence. A key insight in our proof lies in the observation that the fluctuations of the solutions remain under control in low regularity spaces, allowing for a potential quantification of their behavior in the limit as $\ell\to 0$. This is achieved through a careful asymptotic analysis of the perturbative terms in the regularized equation, which, in our view, constitutes a pivotal contribution to the core findings of this paper.
\end{abstract}
 
\medskip

 {\small{\bf AMS Classification :} 35L65; 35L67; 35Q35.

\medskip

{\bf Key words :} Scalar conservation laws; Regularization; Ole\u{\i}nik inequality;  Convergence rate. }

\tableofcontents

\section{Introduction}

\subsection{Motivation}
The occurrence of shock formation in solutions of the scalar conservation laws
\begin{align}\label{SCL} \tag{Scl}
\partial_t u + \partial_x  f(u) = 0,
\end{align}
is a well-known phenomenon. Given any smooth initial data $u_0$, a unique strong solution exists. However, due to the nonlinear nature of the flux   $f$, discontinuous shock waves may develop in finite time. 
This behavior represents one of the challenges associated with nonlinear conservation laws.
In order to avoid the occurrence of shocks, various regularization techniques have been proposed. These regularizations aim to smooth out discontinuities by adding ``small'' terms to the equation, such  as diffusion and/or dispersion.
While diffusive regularizations are widely used, they tend to dissipate energy everywhere. On the other hand, the entropy solutions of \eqref{SCL} concentrate the energy dissipation at singularities.
Nevertheless, diffusive regularizations are considered as solid tools in establishing the existence of solutions and in justifying the a priori estimates via the vanishing viscosity method.
Dispersion regularizations lead to the appearance of spurious oscillations and fail, in general, to converge to the entropy solutions of \eqref{SCL}.

\subsection{The equations of our interest}
In order to introduce a regularization while preserving essential properties of the original equations, Clamond and Dutykh \cite{CD18} derived a non-diffusive, non-dispersive regularized Saint-Venant (rSV) system.
The study of traveling-wave solutions to the rSV system has been done in \cite{PPDC18}. 
Furthermore, the local well-posedness of that system and a construction of initial data leading to the appearance of singularities have been studied in \cite{LPP19}.
The rSV system has been generalized lately to regularize the barotropic Euler system \cite{GCJ22}.
Inspired by \cite{CD18}, and due to the complexity of studying the singular limit for those systems, Guelmame et al. \cite{GJCP22} proposed and studied the scalar non-diffusive, non-dispersive regularized Burgers  equation  
\begin{equation}\label{rB0} \tag{rB} 
\partial_t u^\ell + u^\ell \partial_x u^\ell = \ell^2  \left(   \partial_{txx}^3 u^\ell + 2 \partial_x u^\ell  \partial_{xx}^2 u^\ell  + u^\ell \partial_{xxx}^3 u^\ell \right),
\end{equation} 
where $\ell$ is a positive parameter.
The  equation \eqref{rB0} is Galilean invariant, it has been derived using a variational principle and it enjoys both Lagrangian and Hamiltonian structures. 
Smooth solutions to \eqref{rB0} conserve an $H^1$ energy, which prevents the appearing of discontinuous shocks, thanks to the Sobolev embedding $H^1(\R) \hookrightarrow C_b^0(\R)$.
In \cite{GJCP22}, the authors studied weakly singular shocks and cusped traveling-wave weak solutions of \eqref{rB0}. Additionally, they demonstrated that for every simple shock-wave entropy solution of the inviscid Burgers equation, there exists a corresponding monotonic traveling-wave dissipative solution of \eqref{rB0}. Notably, these solutions exhibit identical shock speed and energy dissipation rate as the original shock-wave solutions of the Burgers equation, which are recovered taking $\ell \to 0$.

In order to obtain general solutions to \eqref{rB0}, inspired by \cite{BC07a,BC07b}, the authors of \cite{GJCP22} proved the existence of two types of global weak solutions to \eqref{rB0}, conserving or dissipating the energy. The method of proof consists in utilizing two equivalent semi-linear system of ODEs, formulated in the Lagrangian coordinates. One system provides conservative solutions while the other yields to dissipative ones.
Conservative solutions maintain a constant energy for almost all time, including at singularities. They also fail to satisfy a one-sided Ole\u{\i}nik inequality, making them less accurate for regularizing entropy solutions of the Burgers equation.
Conversely, dissipative solutions concentrate the loss of the energy on the singularities and satisfy  the one-sided Ole\u{\i}nik inequality $\partial_x u^\ell(t,x) \leqslant 2/t$ for almost all $(t,x) \in (0,\infty) \times \R$.
The compactness of the dissipative solutions of \eqref{rB0} have been studied in \cite{GJCP22}. However, the equation satisfied in the limit was not identified at that time. 

In a recent work \cite{G23}, the first author considered the regularized scalar conservation law
\begin{equation}\label{rSCL} 
\partial_t u^\ell + \partial_x f(u^\ell) = \ell^2  \left(  \partial_{txx}^3 u^\ell + 2  f''(u^\ell) \partial_x u^\ell \partial_{xx}^2 u^\ell +f'(u^\ell) \partial_{xxx}^3  u^\ell  + \half   f'''(u^\ell)  (\partial_x u^\ell)^3 \right), 
\end{equation}
where $f$ is a uniformly convex flux. Notice that the regularized Burgers equation \eqref{rB0} is recovered taking $f(u) = u^2 /2$.
Using an approximation of \eqref{rSCL} involving a cut-off function, it has been proved in \cite{G23} that global weak dissipative solutions to \eqref{rSCL} exist. Moreover, as $\ell$ approaches zero, it has been shown that
\begin{equation}\label{compactness}
u^\ell \to u \quad \mathrm{in}\ L^\infty_{\text{loc}}(\R^+;L^p_{\text{loc}}(\R)),
\end{equation}
for any $p \in [1, \infty)$, where $u$ is the unique entropy solution of the scalar conservation law \eqref{SCL}. This gives a justification of the denomination ``regularization'' of the equation \eqref{rSCL}. The limit \eqref{compactness} was obtained via abstract compactness arguments which is why it was only established on compact sets without a determination of a convergence rate.
In this paper, we improve the latter result \eqref{compactness} by showing that it holds globally in space and  establishing an explicit convergence rate. More precisely, we will prove later on that
\begin{equation*}
	\norm {u^\ell -u}_{L^\infty([0,T];L^p(\mathbb{R}))} =O(\ell^{\frac {1}{2p}}),
\end{equation*}
for any $T>0$ and $p \in [1, \infty)$. We defer the discussion of this improvement to Section \ref{section:main}.

\subsection{Related equations} The rB equation \eqref{rB0} can be compared to the well-known dispersionless Camassa--Holm  equation \cite{CH93}
\begin{equation}\label{CH}
\partial_t u^\ell + 3 u^\ell \partial_x u^\ell =  \ell^2  \left(   \partial_{txx}^3 u^\ell + 2 \partial_x u^\ell  \partial_{xx}^2 u^\ell  + u^\ell \partial_{xxx}^3 u^\ell \right). \tag{CH}
\end{equation} 
The Camassa--Holm equation appears in modeling nonlinear wave propagation in the shallow-water regime.
Both  \eqref{rB0} and   \eqref{CH} conserve an $H^1$ energy for smooth solutions and they admit global weak conservative and dissipative solutions. Two key differences between the two equations are: 
(1) the   equation \eqref{CH}  is bi-Hamiltonian (therefore integrable) while only one Hamiltonian structure is known for the   equation \eqref{rB0}; 
(2) the    equation \eqref{rB0} is Galilean invariant while the equation \eqref{CH}   is not.  
The Galilean invariance is crucial from the physical point of view and also for proving mathematical results.
Indeed, due to the lack of the Galilean invariance, we could only prove that dissipative solutions of the   equation \eqref{CH} satisfy a one-sided Ole\u{\i}nik inequality involving a constant that blows-up as $\ell \to 0$.
This makes the singular limit $\ell \to 0$ for the   equation \eqref{CH} more challenging. 
To the authors' knowledge, this remains an open problem. 
However, in the presence of the viscosity in \eqref{CH}, and under a condition that $\ell$ is small compared to the viscosity parameter, the unique entropy solution of the equation $\partial_t u+ \partial_x (3u^2/2)=0$ is recovered by taking both parameters to zero \cite{CD16,CD17,CK06,H07}.

Another similar equation is the hyperelastic-rod wave equation \cite{D98a,D98b,DH00} 
\begin{equation}\label{Hyp-rod}
\partial_t u^\ell + 3 u^\ell \partial_x u^\ell = \ell^2  \left(  \partial_{txx}^3 u^\ell + \gamma \left( 2 \partial_x u^\ell  \partial_{xx}^2 u^\ell  + u^\ell \partial_{xxx}^3 u^\ell \right) \right),
\end{equation} 
where $\gamma$ is a real parameter.
The equation \eqref{Hyp-rod} describes radial deformation waves in cylindrical hyperelastic rods with a finite length and small amplitude.
Existence of global weak solutions to \eqref{Hyp-rod} has been established in \cite{CHK05,HR07}.
Observe that the Camassa--Holm equation is recovered taking $\gamma=1$ in \eqref{Hyp-rod}. 
It worths noting that the equation \eqref{Hyp-rod} satisfies a Galilean-like invariance property  only when $\gamma$ takes the values  $0$ or $3$. Setting $\gamma = 0$ yields to the Benjamin--Bona--Mahony equation \cite{BBM72}, which describes long surface gravity waves of small amplitude.
The value $\gamma = 3$, on the other hand, corresponds to the regularized equation \eqref{rSCL} with $f(u)=3u^2/2$ (or simply to \eqref{rB0} after a change of variables).
Therefore, we emphasize that the results established in \cite{G23} and in the present paper work for the hyperelastic-rod wave equation \eqref{Hyp-rod} with $\gamma=3$, as well.

The   equation \eqref{rB0} can also be compared to the Leray-type regularization proposed and studied by Bhat and Fetecau \cite{BF06,BF09}
\begin{equation}\label{BF}  
\partial_t u^\ell + u^\ell \partial_x u^\ell = \ell^2  \left(  \partial_{txx}^3 u^\ell  + u^\ell \partial_{xxx}^3 u^\ell \right),
\end{equation} 
which admits global solutions. Moreover, as $\ell \to 0$, solutions to \eqref{BF} converge, up to a subsequence, to a weak solution of the Burgers equation. 
Additionally, considering a simple Riemann problem with a decreasing initial data, the correct shock of the Burgers equation is recovered. 
However, for an increasing initial data, solutions of \eqref{BF} create  non-entropy jumps \cite{BF09}.

\subsection{Outline} 

This paper is organized as follows.  
In Section \ref{section:main}, we discuss some crucial basis of the equations of our interest, including a result about the existence of solutions to \eqref{rSCL}  and  we state the main theorem of this paper.  Then,  Section \ref{section:UB} is devoted to obtaining uniform bounds on the solutions to  viscous approximations of both equations \eqref{SCL} and \eqref{rSCL}. Thereafter, in Section \ref{section:decay}, we establish a decay estimate on the perturbative terms of \eqref{rSCL} (on its non-local form, see \eqref{rB} below) and prove the main result of the paper (Theorem \ref{main:thm}). At last, for clarity, we defer the recap on the definitions of the functional spaces utilized in this paper to Appendix \ref{appendix}, where, for the sake of completeness,  we also collect a few useful properties of these  spaces which apply in our proofs.

\section{Preliminaries and Main result}\label{section:main}

 Before we state our main result, allow us to prepare the ground around it by first setting up the essential assumptions in the paper and introducing the notion of solutions we are concerned with. Here, we are interested in the behavior, as $\ell$ tends to zero, of solutions to the regularized scalar conservation laws
 \begin{equation}\label{rB}
\partial_t u^\ell + \partial_x f(u^\ell)  + \ell^2  \partial_x P^\ell = 0, \qquad \quad  P^\ell - \ell^2  \partial_x^2 P^\ell = \half  f''(u^\ell) \left(\partial_x u^\ell\right)^2,
\end{equation}
  with an initial datum   $u_0\in H^1(\mathbb{R})$. Hereafter, we chose to lighten our notations by denoting $ \partial_{x}^2$ instead of $\partial_{xx}^2$.
 
 Henceforth,   the flux $f$ is assumed to be a regular uniformly convex function in the sense that \begin{gather}  \label{assum:flux}
f \in C^4(\R), \qquad 0 < c_1 \leqslant f''(u) \leqslant c_2 < \infty ,
\end{gather}
for some given positive constants $c_1$ and $c_2$. Additionally, it will become apparent later on that the initial datum will be required to be of a bounded variation and satisfies a one-sided Lipschitz condition, that is
\begin{gather}\label{assum:BV}
u_0' \in L^1(\R) \quad \text{and} \quad M \eqdef \sup_{x \in \R} u_0'(x) < \infty .
\end{gather}
Further precisions on the initial data will be discussed, later on.
Note that the equation \eqref{rB} is  equivalent to   \eqref{rSCL} for smooth solutions. Indeed, one easily sees  that \eqref{rSCL} is formally recovered from \eqref{rB}  by applying the elliptic operator $\text{Id}-\ell^2\partial^2_x$. Clearly the analysis of  \eqref{rB}, and thus \eqref{rSCL}, hinges upon a comprehensive study of the term $P^\ell$ and its behavior in suitable functional spaces. A primary important observation here is the validity of the identity  
\begin{equation*} 
P^\ell = \left(\text{Id}-\ell^2\partial^2_x \right)^{-1} \left( \half  f''(u^\ell) \left(\partial_x u^\ell\right)^2 \right) =  \half  f''(u^\ell) \left(\partial_x u^\ell\right)^2 \ast \mathfrak{G}_\ell ,
\end{equation*}
where
\begin{equation*}
\mathfrak{G}_\ell(x) \bydef  {\textstyle \frac{1}{2\ell} }   \exp\left( -{\textstyle \frac{|x|}\ell}\right) .
\end{equation*}  
Notice that  \eqref{assum:flux} entails the lower bound  
\begin{equation}\label{P:positive}
	P^\ell\geqslant 0,
\end{equation}
which will come in handy later on.

Let us now introduce the notion of solutions that we are concerned with in this paper.
\begin{definition}\label{WSDef} We say that $u^\ell \in L^\infty(\mathbb{R}^+; H^1) \cap \mathrm{Lip}(\mathbb{R}^+; L^2)$ is a weak dissipative solution of \eqref{rB} if it satisfies the equation in the $L^2$ sense, dissipates the energy in a weak sense
\begin{gather*} 
\partial_t \left( \half \left(u^{\ell}\right)^2 + \half \ell^2 \left(\partial_x u^{\ell}\right)^{ 2} \right) 
+ \partial_x \left(  K \left(u^{\ell}\right) + \half  \ell^2  f'\! \left(u^{\ell}\right)  \left(\partial_x  u^{\ell}\right)^2 + \ell^2  u^{\ell}  P^\ell  \right)  \leqslant 0,
\end{gather*}
where $K'(u)=uf'(u)$, and is  right continuous in $H^1$, that is to say 
\begin{equation*} 
\lim_{\substack{t\to t_0\\  t> t_0}} \left\| u^\ell(t,\cdot)  -  u^\ell(t_0,\cdot)\right\|_{H^1} = 0,
\end{equation*}
for all $t_0 \geqslant 0$.
In particular, the solution is required to satisfy the initial    condition $u^\ell(0,\cdot)=u_0$ in the sense of the $H^1$ norm. 
\end{definition}
 
One way to establish the existence of global weak dissipative solutions to \eqref{rB} can be performed via  introducing a viscosity term, leading to  the equation
\begin{equation}\label{rBep}
\partial_t u^{\ell,\varepsilon} + \partial_x f(u^{\ell,\varepsilon}) + \ell^2  \partial_x P^{\ell,\varepsilon} = \varepsilon  \partial_x^2 u^{\ell,\varepsilon}, \qquad \quad  P^{\ell,\varepsilon} - \ell^2  \partial _x^2 P^{\ell,\varepsilon} = \half  f''(u^{\ell, \varepsilon}) \left(\partial_x u^{\ell,\varepsilon}\right)^2,
\end{equation}
supplemented with the regularized initial datum $u^{\ell,\varepsilon}(t,\cdot) = u^{\varepsilon}_0 \bydef u_0 \ast \varphi_\varepsilon$, where $\varphi _\varepsilon$ stands for the standard one-dimensional Friedrich's mollifier.
Additionally,
following \cite{CHK05,G23,XZ00}, one can show that, up to an extraction of a subsequence, solutions of \eqref{rBep} converge to dissipative solutions of \eqref{rB}  as $\varepsilon \to 0$ in the sense that
\begin{equation}\label{vanishingviscosity}
	u^{\ell,\varepsilon} \to u^\ell \quad  \text{in }  C_{\text{loc}}([0,\infty) \times \R)  .
\end{equation} 
 In another word,  solutions of \eqref{rB} can be constructed as accumulation points of the family of regularized solutions $u^{\ell,\varepsilon}$ as $\varepsilon \to 0$.  As a result, the following theorem holds.

\begin{thm} \label{thm:existence}
Consider an initial datum $u_0\in H^1(\mathbb{R})$ and assume that the flux is uniformly convex in the sense of \eqref{assum:flux}. 
Then, for any $\ell>0$, there exists a global weak dissipative solution $u^\ell \in L^\infty ([0,\infty ), H^1(\mathbb{R})) \cap C([0,\infty ) \times \mathbb{R})$ of \eqref{rB} in the sense of Definition \ref{WSDef} satisfying the following:
\begin{itemize}
\item For any $T>0$, any bounded set $ [a,b] \subset \mathbb{R}$ and $\alpha \in [0,1)$ there exists $C=C(\alpha, T, a, b, \ell)>0$ such that 
\begin{equation*} 
\int_0^T \int_a^b \left(\left|\partial_t u^\ell\right|^{2+\alpha} + \left| \partial_x u^\ell\right|^{2+\alpha} \right) \mathrm{d}x\,  \mathrm{d}t \leqslant C . 
\end{equation*}
\item The   one-sided Ole\u{\i}nik inequality
\begin{equation*} 
\partial_x u^{\ell}(t,x) \leqslant \frac{1}{  \frac{ c_1 t}{2}  +  \frac 1M} \quad \text{a.e. } (t,x) \in (0,\infty) \times \R,
\end{equation*}
where $M=\sup_{x\in \mathbb{R}} u_0'(x) \in (0,\infty].$ 
\end{itemize}
Moreover, if the initial datum satisfies \eqref{assum:BV}, then it holds that 
\begin{equation*} 
\left\| u^{\ell}(t) \right\|_{L^\infty}  \leqslant  \left\| \partial_x u^{\ell} (t) \right\|_{L^1}  \leqslant \left\| u_0' \right\|_{L^1} \left( \frac{c_1  M  t}{2}  +  1 \right)^{\frac{2  c_2}{c_1} },
\end{equation*}
for all $t\geqslant 0$.
\end{thm}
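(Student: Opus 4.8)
The plan is to realize $u^\ell$ as a vanishing-viscosity limit of the parabolic system \eqref{rBep}, proving every asserted bound first at the level of $u^{\ell,\varepsilon}$ with constants independent of $\varepsilon$, and then transferring them to the limit. For fixed $\ell,\varepsilon>0$, eliminating $P^\varepsilon=\mathfrak{G}_\ell\ast(\half f''(u^{\ell,\varepsilon})(\partial_x u^{\ell,\varepsilon})^2)$ turns \eqref{rBep} into a semilinear parabolic equation with smooth data $u_0^\varepsilon=u_0\ast\varphi_\varepsilon$, so a standard fixed-point and continuation argument produces a unique global smooth solution; in particular $\|u^{\ell,\varepsilon}(t)\|_{L^\infty}\leq C\|u_0\|_{H^1}$ uniformly, which keeps $f'(u)$ and $f''(u)$ bounded along the flow and makes all the computations below classical. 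Once the uniform bounds are in hand, the compactness they generate yields \eqref{vanishingviscosity}, and weak/weak-$\ast$ lower semicontinuity passes each estimate to $u^\ell$; identifying the limit as a weak dissipative solution in the sense of Definition \ref{WSDef} is then routine.

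First I would record the energy law: testing the first equation of \eqref{rBep} against $u^{\ell,\varepsilon}-\ell^2\partial_x^2 u^{\ell,\varepsilon}$ and invoking the elliptic relation for $P^\varepsilon$ gives, after integrations by parts, a balance whose only non-conservative contribution is the manifestly dissipative term $-\varepsilon\int\big((\partial_x u^{\ell,\varepsilon})^2+\ell^2(\partial_x^2 u^{\ell,\varepsilon})^2\big)\leq0$. This delivers the uniform bound $\|u^{\ell,\varepsilon}(t)\|_{H^1}\leq\|u_0\|_{H^1}$ (hence $\partial_x u^{\ell,\varepsilon}\in L^\infty_t L^2_x$ uniformly) together with the dissipation inequality of Definition \ref{WSDef} in the limit, and it records the $\varepsilon$-weighted bound $\varepsilon\int_0^T\!\!\int(\partial_x^2 u^{\ell,\varepsilon})^2\leq\ell^{-2}\|u_0\|_{H^1}^2$ that I will need later. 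For the Oleinik bound, set $v=\partial_x u^{\ell,\varepsilon}$; differentiating \eqref{rBep} in $x$ and using $\ell^2\partial_x^2 P^\varepsilon=P^\varepsilon-\half f''(u)v^2$ yields
\begin{equation*}
\partial_t v+f'(u)\partial_x v+\half f''(u)v^2+P^\varepsilon=\varepsilon\partial_x^2 v .
\end{equation*}
Since $P^\varepsilon\geq0$ by \eqref{P:positive} and $f''\geq c_1$ by \eqref{assum:flux}, $v$ is a subsolution of the Riccati ODE $y'=-\tfrac{c_1}2 y^2$, $y(0)=M$, whose solution is $y(t)=(\tfrac{c_1 t}2+\tfrac1M)^{-1}$; a parabolic comparison argument (the diffusion only helps at an interior maximum of $v$) then gives $v\leq y(t)$ uniformly in $\varepsilon$, which is exactly the stated one-sided inequality, valid also for $M=\infty$.

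The $L^\infty/\dot W^{1,1}$ growth estimate follows by a Gronwall argument under \eqref{assum:BV}. Multiplying the $v$-equation by $\mathrm{sgn}(v)$ and integrating, the transport term contributes $-\int f''(u)v|v|$, while $-\int P^\varepsilon\mathrm{sgn}(v)\leq\int P^\varepsilon=\int\half f''(u)v^2$ because $\int\mathfrak{G}_\ell=1$ and $P^\varepsilon\geq0$; the negative-part contributions cancel and the viscous term is non-positive, leaving $\tfrac{d}{dt}\|v(t)\|_{L^1}\leq c_2\int_{\{v>0\}}v^2$. On $\{v>0\}$ the Oleinik bound gives $v^2\leq y(t)\,v$, so $\int_{\{v>0\}}v^2\leq y(t)\|v\|_{L^1}$ and hence $\tfrac{d}{dt}\|v\|_{L^1}\leq c_2\,y(t)\,\|v\|_{L^1}$; integrating $c_2\int_0^t y=\tfrac{2c_2}{c_1}\ln(\tfrac{c_1 M t}2+1)$ reproduces the exponent $2c_2/c_1$ and the claimed bound, while $\|u^\ell(t)\|_{L^\infty}\leq\|\partial_x u^\ell(t)\|_{L^1}$ is immediate from $u^\ell(t,\cdot)\in H^1$ vanishing at infinity.

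The genuine difficulty, and the step I expect to be the main obstacle, is the local higher integrability. The positive part is cheap: since $v^+\leq y(t)$, interpolation gives $\int_0^T\!\!\int_a^b(v^+)^{2+\alpha}\leq\|v\|_{L^\infty_tL^2_x}^2\int_0^T y(t)^\alpha\,dt$, finite precisely because $\alpha<1$ makes $t^{-\alpha}$ integrable. For the negative part I would renormalize the $v$-equation with $(v^-)^{1+\alpha}$, i.e.\ test it against a spatial cutoff $\chi\geq0$ times $\partial_v[(v^-)^{1+\alpha}]\sim(v^-)^\alpha$. The crucial bookkeeping is that the Riccati term produces $+\tfrac{1+\alpha}2\int\chi f''(v^-)^{2+\alpha}$ while the transport term produces $-\int\chi f''(v^-)^{2+\alpha}$, so their sum carries the coefficient $\tfrac{\alpha-1}2$; moving it to the other side leaves $\tfrac{1-\alpha}2\int_0^T\!\!\int\chi f''(v^-)^{2+\alpha}$ with a strictly positive prefactor exactly when $\alpha<1$. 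Every remaining term is then absorbed: the time-boundary terms and the $\chi'$-commutator by the $L^\infty_t L^2_x$ energy bound, the $P^\varepsilon$-term by the $L^\infty$ bound $\|P^\varepsilon\|_\infty\lesssim\ell^{-1}\|v\|_{L^2}^2$ (this is the source of the $\ell$-dependence of $C$), and the viscous term by the concavity of the renormalization together with the $\varepsilon$-weighted dissipation from the energy law. This closes a uniform bound on $\int_0^T\!\!\int_a^b(v^-)^{2+\alpha}$, and combined with the positive-part bound yields the $\partial_x u^\ell$ estimate. The bound on $\partial_t u^\ell$ is not independent: from $\partial_t u^\ell=-f'(u^\ell)\partial_x u^\ell-\ell^2\partial_x P$ with $f'$ bounded and $\partial_x P=\partial_x\mathfrak{G}_\ell\ast\half f''(u^\ell)(\partial_x u^\ell)^2\in L^\infty_{t,x}$, its $(2+\alpha)$-integrability is inherited from that of $\partial_x u^\ell$. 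The delicacy lies entirely in arranging the renormalization and cutoff so that the only term of critical order $(v^-)^{2+\alpha}$ survives with a favorable sign, which is possible solely in the subcritical range $\alpha<1$.
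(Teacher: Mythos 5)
Your proposal is correct and is essentially the paper's own approach: the paper likewise realizes $u^\ell$ as a vanishing-viscosity limit of \eqref{rBep} (deferring the limit passage and the verification of Definition \ref{WSDef} to \cite{G23,CHK05,XZ00}), and its Section \ref{section:UB} establishes exactly your three uniform estimates --- the $H^1$ energy identity \eqref{eneequation}, the Ole\u{\i}nik bound (obtained there by tracking the supremum via \cite{CE98} instead of your parabolic comparison argument), and the $\dot W^{1,1}$ Gr\"onwall bound with the exponent $2c_2/c_1$ (obtained with the smoothed absolute value $S_\delta$ of \eqref{Sdef} in place of your formal $\mathrm{sgn}$ multiplier, exploiting the same cancellation of the negative-part contributions). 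Your renormalized $(v^-)^{1+\alpha}$ argument for the local $L^{2+\alpha}$ bound supplies the one item the paper leaves entirely to \cite{G23}, and your ``routine'' identification of the limit as a dissipative solution is glossed at the same level of detail as in the paper itself.
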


The proof of Theorem \ref{thm:existence} is presented in \cite{G23} using another approximated equation involving a cut-off function, rather than the viscous approximation \eqref{rBep}. However, the same elements of proof therein   remain valid for the viscous approximation, too. See also \cite{CHK05,XZ00} for the vanishing viscosity limit for the Camassa--Holm equation. 

Henceforth, we agree that $u^\ell$ is a dissipative solution obtained from a vanishing viscosity process, though,   we believe that it  possible to show that this is actually the unique dissipative solution to \eqref{rB}, as discussed in our next remark.

\begin{rem}

Dafermos \cite{D11} proved, following the characteristics, that dissipative solutions of the Hunter--Saxton equation are unique. In the same spirit, the uniqueness of dissipative solutions to the Camassa--Holm equation \eqref{CH} has been proved in \cite{J16}. Additionally, a different proof has been established recently in \cite{G23a}. Although  we do not address   such an issue in this paper,  we believe that, following the same arguments from \cite{D11,G23a,J16}, one could prove the uniqueness of dissipative solutions to the regularized equation \eqref{rB}, too. 
\end{rem}

As previously discussed in the introduction, given any     solution of \eqref{rB} by Theorem \ref{thm:existence}, the next natural question to be asked is about its behavior as $\ell$ tends to zero. In \cite{G23}, the first author constructed global dissipative solutions to \eqref{rB} converging to the unique entropy solution of \eqref{SCL} in $ L^\infty_{\text{loc}}(\mathbb{R}^+;L^p_{\text{loc}}(\R)),$ for any   $p\in [1,\infty)$.

  The main result of this paper improves the preceding convergence by showing that it holds globally in space and by also obtaining a precise rate of convergence. This is the content of the next theorem.

\begin{thm}\label{main:thm}
Let $u^\ell$ be any solution, given by Theorem \ref{thm:existence}, of \eqref{rB}   with a uniformly convex flux \eqref{assum:flux} and an initial datum $u_0\in H^1(\mathbb{R})$ satisfying \eqref{assum:BV}. Consider, moreover,  $u$ to be the unique entropy solution of the scalar conservation law  \eqref{SCL} with the same initial datum $u_0$. Then, for any $T>0$, there exists a constant 
\begin{equation*}
C = C\left(T,\norm{u_0}_{H^1(\R)},  \norm{u_0'}_{L^1(\R)}, M, c_1,c_2 \right) > 0,
\end{equation*}
such that
\begin{equation}\label{uell-u}
\norm{u^\ell - u}_{L^\infty ([0,T];L^p(\R))} \leqslant C \ell^\frac{1}{2 p},
\end{equation}
for any $\ell \in (0,1]$ and $p \in [1, \infty)$.
\end{thm}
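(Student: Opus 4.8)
The target rate $\ell^{1/(2p)}$ is exactly what an $L^1$-estimate of order $\ell^{1/2}$ produces after interpolation against a uniform $L^\infty$ bound. Indeed, both $u^\ell$ and $u$ are bounded in $L^\infty$ uniformly in $\ell$ on $[0,T]$ — for $u^\ell$ by the bound $\norm{u^\ell}_{L^\infty}\le\norm{\partial_x u^\ell}_{L^1}$ of Theorem~\ref{thm:existence}, and for $u$ by the maximum principle for entropy solutions together with $\norm{u_0}_{L^\infty}\le\norm{u_0'}_{L^1}$. Hence the elementary interpolation $\norm{g}_{L^p}\le\norm{g}_{L^\infty}^{1-1/p}\norm{g}_{L^1}^{1/p}$ reduces \eqref{uell-u} to establishing
\[
\norm{u^\ell-u}_{L^\infty([0,T];L^1(\R))}\le C\,\ell^{1/2},
\]
and the exponent $1/(2p)$ then drops out automatically. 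So the whole matter is an $L^1$ convergence rate of order $\ell^{1/2}$.

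To obtain it I would work, as announced in Section~\ref{section:UB}, with the viscous approximations $u^{\ell,\varepsilon}$ of \eqref{rB} and $u^{\varepsilon}$ of \eqref{SCL} sharing the same regularized data and the same viscosity, deriving the estimate uniformly in $\varepsilon$ before passing to the limit via \eqref{vanishingviscosity}. In the limit $u^\ell$ satisfies the Kruzhkov entropy inequalities with a single defect carried by the perturbation, $\partial_t|u^\ell-k|+\partial_x[\mathrm{sign}(u^\ell-k)(f(u^\ell)-f(k))]\le-\ell^2\,\mathrm{sign}(u^\ell-k)\,\partial_x P$, while $u$ is the exact entropy solution. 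The plan is to run the doubling of variables between $u^\ell$ and $u$ with a spatial mollification at scale $\rho$. The crucial point is that the defect is in conservation form, $-\ell^2\partial_x P=\partial_x(-\ell^2P)$, so integrating the extra $x$-derivative onto the mollifier trades it for a factor $\rho^{-1}$ and leaves the perturbation measured \emph{without} a derivative. Together with the uniform bound $\norm{\partial_x u^\ell}_{L^\infty_T L^1}\le C$ from Theorem~\ref{thm:existence}, this produces a Kuznetsov-type inequality of the shape
\[
\norm{u^\ell(T)-u(T)}_{L^1}\ \lesssim\ \rho\,\norm{\partial_x u^\ell}_{L^\infty_T L^1}+\frac1\rho\int_0^T\!\!\int_\R \ell^2 P\,\ud x\,\ud t .
\]
Optimizing in $\rho$ is what manufactures the square root: the choice $\rho\sim\ell^{1/2}$ yields $\norm{u^\ell-u}_{L^\infty_T L^1}\lesssim\ell^{1/2}$, \emph{provided} the space--time mass of the perturbation is of order $\ell$.

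Everything is thereby reduced to the decay estimate of Section~\ref{section:decay}, namely $\int_0^T\!\int_\R\ell^2P\,\ud x\,\ud t\le C\ell$. Since $\mathfrak{G}_\ell$ is a probability density, $\int_\R P=\int_\R\half f''(u^\ell)(\partial_x u^\ell)^2$, so this is equivalent to $\int_0^T\norm{\partial_x u^\ell(t)}_{L^2}^2\,\ud t\le C\ell^{-1}$, i.e. to gaining one full power of $\ell$ over the trivial bound $O(\ell^{-2})$ coming from the conserved $H^1$-energy. I would split $\partial_x u^\ell$ into positive and negative parts: the positive part is harmless, because the Ole\u{\i}nik bound gives $(\partial_x u^\ell)^+\le M$ and the $BV$-bound gives $\int_\R((\partial_x u^\ell)^+)^2\le M\norm{\partial_x u^\ell}_{L^1}\le C$ uniformly in $\ell$.

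\textbf{Main obstacle.} The entire difficulty is concentrated in the negative part, which is precisely where the regularized shocks live and where $\partial_x u^\ell$ descends to the regularization scale $-O(\ell^{-1})$; bounding $\int_0^T\!\int_\R((\partial_x u^\ell)^-)^2$ by $C\ell^{-1}$ is the crux. I expect to control it through the slope equation $\partial_t(\partial_x u^\ell)+f'(u^\ell)\partial_x(\partial_x u^\ell)+\half f''(u^\ell)(\partial_x u^\ell)^2+P=0$ and the uniform bounds of Section~\ref{section:UB}, exploiting the favorable sign $P\ge0$ and the identity $\tfrac{\ud}{\ud t}\int_\R(\partial_x u^\ell)^2=-2\int_\R(\partial_x u^\ell)\,P$ to quantify how the steep negative slopes remain confined to an $O(\ell)$-thin layer. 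Once this decay estimate is secured, the Kuznetsov inequality and the interpolation above close the argument for Theorem~\ref{main:thm}.
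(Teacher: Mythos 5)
Your overall architecture matches the paper's in spirit: the reduction of \eqref{uell-u} to an $L^1$ rate of order $\ell^{1/2}$ by $L^1$--$L^\infty$ interpolation, the identification of the space--time bound $\int_0^T\!\int_\R \ell^2 P\,\ud x\,\ud t\lesssim \ell$ as the crux (this is exactly Proposition \ref{mu=0}), and the idea that the conservation form of the defect should let it be measured \emph{without} a derivative at the price of $\rho^{-1}$, then optimized against the BV bound. However, two steps of your proposal have genuine gaps. The first is the Kuznetsov-type inequality itself: it does not follow from the doubling of variables as you claim. In the entropy inequalities the defect appears as $-\ell^2\,\mathrm{sign}(u^{\ell,\varepsilon}-k)\,\partial_x P^\varepsilon$, and the sign factor obstructs the integration by parts: writing
\begin{equation*}
-\mathrm{sign}(u^{\ell,\varepsilon}-k)\,\partial_x P^\varepsilon
= -\partial_x\bigl[\mathrm{sign}(u^{\ell,\varepsilon}-k)\,P^\varepsilon\bigr]
+ P^\varepsilon\,\partial_x\bigl[\mathrm{sign}(u^{\ell,\varepsilon}-k)\bigr],
\end{equation*}
the first term indeed costs $\rho^{-1}\iint \ell^2 P^\varepsilon$ after testing against the mollifier, but the second is a measure carried by the level crossings $\{u^{\ell,\varepsilon}=k\}$, with indefinite sign (positive at upcrossings, negative at downcrossings) and weight $\ell^2P^\varepsilon$, which is of size $O(1)$ inside the regularized shock layers; it admits no uniform-in-$k$ control, and Kuznetsov/Bouchut--Perthame-type lemmas require exactly such control. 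The chain-rule identity $\mathrm{sign}(v-k)\partial_x v=\partial_x|v-k|$ (Kato's inequality), which rescues the analogous computation for the viscous defect $\varepsilon\partial_x^2 v$, is unavailable here because $P^\varepsilon$ is \emph{not} a pointwise function of $u^{\ell,\varepsilon}$. Falling back on the crude bound by $\iint \ell^2|\partial_x P^\varepsilon|$, which standard Kruzhkov stability does permit, is not enough: since $\|\partial_x\mathfrak{G}_\ell\|_{L^1}=\ell^{-1}$, that quantity is only $O(1)$ even granting the decay estimate. The paper circumvents precisely this obstruction by passing to the primitive $\zeta^{\ell,\varepsilon}(t,x)=\int_{-\infty}^x(u^{\ell,\varepsilon}-u^\varepsilon)(t,y)\,\ud y$, which solves the transport equation \eqref{Weq}: there the defect $\ell^2P^\varepsilon$ enters with no derivative and with a favorable sign, the flux difference becomes $b^{\ell,\varepsilon}\partial_x\zeta^{\ell,\varepsilon}$ with $\partial_x b^{\ell,\varepsilon}$ bounded above via the two Ole\u{\i}nik inequalities \eqref{Oleinikellep} and \eqref{Ol:vscl}, and a Gr\"onwall argument gives $\|\zeta^{\ell,\varepsilon}\|_{L^\infty([0,T];L^1)}\lesssim\ell$; the Besov interpolation of Section \ref{proof:main:thm} (the rigorous counterpart of your optimization in $\rho$) then produces the $L^1$ rate $\ell^{1/2}$.

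Second, the decay estimate is asserted rather than proved, and the sketch you give cannot yield it. Controlling the positive part of $\partial_xu^{\ell,\varepsilon}$ by Ole\u{\i}nik and BV is correct (the paper does the same in Step 3 of Proposition \ref{mu=0}), but for the negative part you only invoke $P^\varepsilon\geqslant0$ and the identity $\frac{\ud}{\ud t}\int_\R(\partial_xu^{\ell})^2\,\ud x=-2\int_\R \partial_xu^\ell\,P\,\ud x$. That identity is nothing but the $\ell^2$-component of the conserved $H^1$ energy balance; by itself it reproduces the trivial bound $\int_0^T\|\partial_x u^\ell\|_{L^2}^2\,\ud t=O(\ell^{-2})$ and cannot produce the required gain of a full power of $\ell$. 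The paper's proof is a four-step bootstrap: (i) multiplying the slope equation \eqref{rBep_x} by $\ell^2|q^{\ell,\varepsilon}|$ to obtain $\iint\ell^2|q^{\ell,\varepsilon}|P^\varepsilon\lesssim1$; (ii) H\"older interpolation giving $\iint\ell^2|q^{\ell,\varepsilon}|^\beta P^\varepsilon\lesssim\bigl(\iint\ell^2P^\varepsilon\bigr)^{1-\beta}$ for $\beta\in(\tfrac23,1)$; (iii) an energy estimate with a tailored convex weight $G$, combined with Ole\u{\i}nik on $\{q^{\ell,\varepsilon}\geqslant-1\}$, to bound $\iint\ell^2|q^{\ell,\varepsilon}|^{2+\beta}$ by $\ell^2+\bigl(\iint\ell^2P^\varepsilon\bigr)^{1-\beta}$; and (iv) a H\"older--Young closure of the resulting nonlinear inequality in the unknown $\iint\ell^2P^\varepsilon$. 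Some argument of comparable substance is needed where your proposal says ``quantify how the steep negative slopes remain confined to an $O(\ell)$-thin layer''; without it, and without a repair of the doubling step above, the proof does not close.
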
 

\begin{rem}
It will be apparent in the proof below that the convergence \eqref{uell-u} can be improved to hold in Sobolev spaces which scale below the $BV$ regularity. However, we have chosen to only state the convergence in Lebesgue spaces for the mere sake of simplicity.
\end{rem}  
In fact, we will prove later on (see Theorem \ref{main:thm:general}) a slightly stronger version of Theorem  \ref{main:thm} where  solely the  initial datum  of the regularized equation \eqref{rB} is assumed to belong to $\dot H^1(\mathbb{R})$. In that case,  the initial datum of \eqref{SCL} is not required to belong to $\dot H^1(\mathbb{R})$ whereas the initial datum of \eqref{rB} may depend on $\ell$ and have a growth rate of its $\dot H^1$-norm at most of order $\ell^{-1}$ as $\ell\to 0$.  

Below, we briefly  discuss the main challenges and comment on the strategy of our proof of the main theorem.

\subsection*{Methodology and idea of the proof}  A naive way to study the convergence of $u^\ell-u$ to zero would be by   performing $L^p$ energy estimates directly on the equation 
\begin{equation}\label{fluctuation:equa}
	\partial_t (u^\ell-u) + \partial_x\left(  f(u^\ell)-f(u)\right) + \ell^2  \partial_x P^\ell = 0.
\end{equation}   
Such a direct attempt to analyze the fluctuations $u^\ell-u$ probably would  not be efficient and the most drawback here would be the potential  instability of the term $\ell^2 \partial_x P^\ell$, as $\ell \to 0$, in any Lebesgue space. Of course, neither the stability of the nonlinear term  $\partial_x\left(  f(u^\ell)-f(u)\right)$ in Lebesgue spaces is clear to be  under control in the case of weak solutions.

The proof that we are going to present in this paper consists in first studying \eqref{fluctuation:equa} in a low regularity space, namely in a $L^\infty_{\mathrm{loc}}(\mathbb{R}^+;\dot W^{-1,1}(\mathbb{R}))$-like space. Once this is done, the convergence in Lebesgue spaces of the fluctuations $u^\ell-u$ will be achieved by a direct interpolation argument, seen that both $u^\ell$ and $u$ enjoy some additional regularity --- the $BV$ bound, to be more precise. This strategy of proof draws insight from the method introduced in \cite{AH23} to study the stability of Yudovich solutions to the two-dimensional Euler equations. Technically    speaking, the  idea consists in taking care of the high and low frequencies of the $L^2_x$-norm separately: the low frequencies of the fluctuations will converge to zero (with a certain rate) due to the convergence in low regularity spaces, whereas the high frequencies are just uniformly bounded due to the additional $BV$ regularity. This paradigm of proof will be implemented here in $L^1_x$-based spaces instead of $L^2_x$  in order to obtain a better rate of convergence.

Thus, a milestone in our approach is based upon the convergence of an anti-derivative of the fluctuations in $L^\infty_{\mathrm{loc}}(\mathbb{R}^+;L^1(\mathbb{R}))$ which is the subject of Section \ref{section:convergence:low}.  A crucial gain in the analysis of the equation \eqref{fluctuation:equa} in $\dot W^{-1,1}_x$ is that we will be solely seeking the stability of $\ell^2    P^\ell $, rather than $\ell^2  \partial_x P^\ell $, in Lebesgue spaces. As we shall prove in Section \ref{section:decay:estimate} later on,  the term $\ell^2 P^\ell$, which is equivalent to $\ell^2|\partial_x u^{\ell}|^2$, enjoys a decay rate of order $\ell  $ in $L^1_{\mathrm{loc}}(\mathbb{R}^+; L^1(\mathbb{R}))$. This is a consequence of a careful analysis, improving on some results from \cite{G23}, and is based on a step-by-step argument (of a bootstrap-type) leading to the aforementioned rate of convergence. 

For clarity, we  point out   that   this roadmap of proof will be conducted on regularized equations; the solutions of which are sufficiently regular to fulfill  all the requirements in our computations and estimates   which are close to the solutions $u^\ell$ and $u$. This will be detailed in Section \ref{section:UB} along side with all the a priori bounds on the regularized solutions. In the end, the proof of Theorem \ref{main:thm} will be outlined in Section \ref{proof:main:thm}.

\subsubsection*{Notations}  Allow us now to introduce some notations that will be routinely used throughout the paper. Given two positive quantities $A$ and $B$, we will often write   $A \lesssim B $ instead of   $A \leqslant C B$ when the dependence on the generic constant $C>0$   is  not of a substantial impact. Moreover, we will sometime use the notation $A \lesssim_\delta B $ to emphasize that the generic constant in that estimate  depends on the some parameter $\delta$, which could blow up when $\delta$ approaches  some critical values.

\section{Uniform estimates}\label{section:UB}

This section is devoted to establishing all the primary lineup of  bounds on $u$ and $u^\ell$, uniformly with respect to the parameter $\ell \in (0,1]$. This is obtained as a consequence of a regularization procedure, made by adding the viscosity dissipation  $\varepsilon \partial_x^2$  to the equations \eqref{SCL} and \eqref{rB} and by smoothing out the initial datum. More precisely, we approximate these equations  by 
\begin{equation}\label{RSC1}
\partial_t u^{\varepsilon} + \partial_x  f(u^{\varepsilon})   = \varepsilon  \partial_x^2 u^{\varepsilon}
\end{equation}
and
\begin{equation}\label{RB1}
\partial_t u^{\ell,\varepsilon} + \partial_x  f(u^{\ell,\varepsilon})  + \ell^2  \partial_x P^{\ell,\varepsilon} = \varepsilon  \partial_x^2 u^{\ell,\varepsilon}, \qquad \quad  P^{\ell,\varepsilon} - \ell^2  \partial _x^2 P^{\ell,\varepsilon} = \half  f''(u^{\ell,\varepsilon}) \left(\partial_x u^{\ell,\varepsilon}\right)^2
\end{equation} 
respectively, where  $\varepsilon\in (0,1]$ and  both regularized equations are supplemented with the smooth initial data   
\begin{equation*}
u^{\varepsilon}|_{t=0}= u^{\varepsilon}_0 \eqdef \varphi_\varepsilon \ast u_0, \qquad	u^{\ell,\varepsilon}|_{t=0}= u^{\ell,\varepsilon}_0 \eqdef \varphi_\varepsilon \ast u_0^\ell, 
\end{equation*} 
where $(\varphi_\varepsilon )_{\varepsilon\in (0,1]}$ stands for the usual one-dimensional mollifier.  In particular, we emphasize that the regularized solutions $u^{\ell,\varepsilon}$ and $u^\varepsilon$ enjoy enough regularities that will allow us to perform all the computations in this section.

As previously emphasized, we shall prove a slightly stronger version of Theorem \ref{main:thm} where the initial data enjoy weaker assumptions. More precisely, henceforth, the original equations \eqref{SCL} and \eqref{rB} will be supplemented with the possibly different initial datum $u_0$ and $u_0^\ell$, respectively, with the emphasis that the  case of Theorem \ref{main:thm} is recovered by setting $u_0^\ell =u_0$ without any substantial change in our arguments below.
 
 In what follows,   we stick to the assumptions that   \begin{equation}\label{u0:COND}
 	u_0 \in L^2(\R) \cap BV(\R) \quad \text{and} \quad  M \eqdef \sup_{x,y \in \R,\ x \neq y} \frac{u_0(x)-u_0(y)}{x-y} < \infty.
 \end{equation}
Moreover, we consider a family $(u_0^\ell)_{\ell > 0}$ of smooth initial data satisfying
\begin{equation}\label{u0ell1}
\|u_0^\ell\|_{L^2(\R)} \leqslant \|u_0\|_{L^2(\R)}, \qquad \|\partial_x u_0^\ell\|_{L^1(\R)} \leqslant \|u_0\|_{BV(\R)}, \qquad  \sup_{x\in \R}\partial_x u_0^\ell(x) \leqslant M
\end{equation}
and 
\begin{equation}\label{u0ell2}
\ell \|\partial_x u_0^\ell\|_{L^2(\R)} \lesssim_{u_0} 1 ,
\end{equation} 
for any $\ell\in (0,1]$. Notice that, under the first two conditions in \eqref{u0ell1},  the  weak convergence of $u_0^\ell$   to $u_0$, as $\ell\to 0$, is equivalent to its strong convergence in $L^p(\mathbb{R})$, for any $p\in [2,\infty)$. This is a direct consequence of Fatou's lemma.

In Sections \ref{section:uniform:1} and \ref{section:uniform:2} below, we outline the elements of proof of the energy bounds as well as the $\dot W^{1,1}$ control on the regularized solutions uniformly with respect to the parameters $\ell, \varepsilon \in (0,1]$. 
 Note that similar findings have been established by the first author in \cite{G23} using a different approach, based on a cut-off argument. Here, we show that the same final bounds on $u^\ell$ can be achieved by a classical viscosity-regularization procedure, as well.

\subsection{Uniform bounds on $u^{\ell,\varepsilon}$}\label{section:uniform:1} We begin with establishing  the $H^1$-energy bound on $u^{\ell,\varepsilon}$.  To that end, introducing
 \begin{equation*} 
q^{\ell,\varepsilon} \eqdef \partial_x u^{\ell,\varepsilon}
\end{equation*}
and differentiating \eqref{RB1} with respect to the space variable we obtain that 
\begin{equation}\label{rBep_x}
\partial_t q^{\ell,\varepsilon}  +  f'(u^{\ell,\varepsilon})  \partial_x q^{\ell,\varepsilon} + \half  f''(u^{\ell,\varepsilon})  (q^{\ell,\varepsilon})^2 + P^{\ell,\varepsilon} = \varepsilon  \partial_x^2 q^{\ell,\varepsilon}.
\end{equation}
Thus, it follows, by summing the resulting equations of multiplying \eqref{RB1} by $u^{\ell,\varepsilon}$ and \eqref{rBep_x} by $\ell^2 q^{\ell,\varepsilon}$, that  
\begin{equation*}  
\begin{aligned}
	\half	\partial_t  &\left(\left(u^{\ell,\varepsilon}\right)^2 
	+  \ell^2 \left(q^{\ell,\varepsilon}\right)^{ 2}\right)
+  \partial_x \left( K\! \left(u^{\ell,\varepsilon}\right) +  \half  \ell^2  f'\! \left(u^{\ell,\varepsilon}\right) \left(q^{\ell,\varepsilon}\right)^2 +  \ell^2  u^{\ell,\varepsilon}  P^{\ell,\varepsilon} \right)\\ 
 & \qquad\qquad  -   \varepsilon  \ell^2  \partial_x \left( q^{\ell,\varepsilon}   \partial_x q^{\ell,\varepsilon} \right)  -    \varepsilon    \partial_x \left( u^{\ell,\varepsilon}   \partial_x u^{\ell,\varepsilon} \right)   = - \varepsilon  \ell^2  \left(\partial_x q^{\ell,\varepsilon}\right)^2  - \varepsilon    \left( q^{\ell,\varepsilon}\right)^2   ,
\end{aligned}
\end{equation*}  
where $K'(u)=uf'(u)$.
Hence, integrating in time and space   and using \eqref{u0ell2} we obtain the energy bound 
\begin{equation} \label{eneequation} 
	\begin{aligned}
		\int_\R \left( |u^{\ell,\varepsilon}|^2  +  \ell^2  |q^{\ell,\varepsilon}|^2 \right) \ud x  + 2  \varepsilon  & \ell^2 \int_0^t \int_\R |\partial_x q^{\ell,\varepsilon}|^2 \ud x\,  \ud t + 2  \varepsilon    \int_0^t \int_\R | q^{\ell,\varepsilon}|^2 \ud x\,  \ud t\\ 
&= \int_\R \left( |u_0^\varepsilon|^2  +  \ell^2  \left|\partial_x u_0^\varepsilon\right|^2 \right) \ud x\
\lesssim_{u_0} 1.
	\end{aligned}
\end{equation}
Subsequently, integrating the second equation on the right-hand side of \eqref{RB1} with respect to the space variable, we obtain, in view of \eqref{assum:flux}, that
\begin{align}\label{ell2P}
\half  c_1  \ell^2  \|q^{\ell,\varepsilon}\|_{L^2}^2 \leqslant\
\int_\mathbb{R} \ell^2  P^{\ell,\varepsilon} \, \mathrm{d}x \leqslant \half  c_2  \ell^2  \|q^{\ell,\varepsilon}\|_{L^2}^2   \lesssim_{u_0} 1. 
\end{align}
The preceding bounds will come in handy, later on. 

The next lemma produces the Ole\u{\i}nik inequality for $u^{\ell,\varepsilon}$. Again, we outline its proof thereafter for completeness.
\begin{lem}\label{Lem:Oleinik}
 Assume $u_0 \in L^2(\mathbb{R})$ and $u_0^\ell$ satisfying
\begin{equation*}
\sup_{x\in \mathbb{R}} \partial_x u_0^\ell(x) \leqslant M \bydef \sup_{x,y \in \R,\ x \neq y} \frac{u_0(x)-u_0(y)}{x-y} \in (0,\infty].
\end{equation*}
Assume that the flux $f$ fulfills the uniform convexity condition  \eqref{assum:flux}. Then, it holds that 
\begin{equation}\label{Oleinikellep}
\partial_x u^{\ell,\varepsilon}(t,x) \leqslant \frac{1}{\frac{c_1  }{2} t  +  \frac{1}{M}} \qquad \text{for a.e. } (t,x) \in (0,\infty) \times \R.
\end{equation}
\end{lem}

\begin{proof} 
We begin with noticing that Theorem 2.1  from \cite{CE98} ensures the existence of at least one point $\xi(t) \in \R$ such that 
\begin{equation*}
h(t) \eqdef \sup_{x \in \R} q^{\ell,\varepsilon}(t,x) = q^{\ell,\varepsilon}(t,\xi(t)),
\end{equation*} 
where the function $h$ is locally Lipschitz and is governed by the equation 
\begin{equation*}
\frac{\ud h}{\ud t} (t) = \partial_t q^{\ell,\varepsilon}(t,\xi(t)), \quad \text{for all } t >0.
\end{equation*}
Since $q^{\ell,\varepsilon}(t,\cdot)$ reaches its maximum at $\xi(t)$, it then follows that 
\begin{equation*}
\partial_x q^{\ell,\varepsilon}(t,\xi(t)) = 0, \qquad \partial_x^2 q^{\ell,\varepsilon}(t,\xi(t)) \leqslant 0.
\end{equation*}
Accordingly, we deduce from  \eqref{assum:flux}, \eqref{rBep_x} and the fact that $P^{\ell,\varepsilon} \geqslant 0$ (which can be established in a similar way   to \eqref{P:positive}) that
\begin{equation*} 
\frac{\ud h}{\ud t} (t)  \leqslant - \frac{c_1}{2} \left(h(t)\right)^2.
\end{equation*}
At last, solving the preceding inequality   with the initial condition   $h(0) \leqslant M$ completes the proof of the lemma.   
\end{proof}

The next item in our agenda is to establish the $\dot W^{1,1}$ bound on $u^{\ell,\varepsilon}$. This is the content of the following lemma.

\begin{lem}\label{lemTV}   Assume that  \eqref{u0:COND}, \eqref{u0ell1} and \eqref{u0ell2} hold for some  $u_0$ and $u_0^\ell$.
 Further  assume that the flux $f$ fulfills the uniform convexity condition  \eqref{assum:flux}. Then, it holds, for all $t\geqslant 0$, that     
\begin{equation}\label{TVep}
   \left\| \partial_x u^{\ell, \varepsilon} (t) \right\|_{L^1(\mathbb{R})}  \leqslant \left\| u_0 \right\|_{BV(\mathbb{R})} \left( \frac{c_1  M  t}{2}  +  1 \right)^{  \frac{2c_2}{c_1}} ,
\end{equation} 
for any $ (\ell,\varepsilon) \in (0,1] \times (0,1] $.
\end{lem}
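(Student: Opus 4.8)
The plan is to derive a differential inequality for the $L^1$ norm of $q^{\ell,\varepsilon} \eqdef \partial_x u^{\ell,\varepsilon}$ by testing the transport equation \eqref{rBep_x} against a sign-type multiplier, and then to close it by Gronwall's lemma using the Ole\u{\i}nik bound \eqref{Oleinikellep}. The decisive structural observation is that the two first-order contributions combine favorably: integrating the transport term $f'(u^{\ell,\varepsilon})\partial_x q^{\ell,\varepsilon}$ by parts produces a factor $-\int f''(u^{\ell,\varepsilon})\,q^{\ell,\varepsilon}|q^{\ell,\varepsilon}|$, which cancels \emph{half} of the quadratic source $\half f''(u^{\ell,\varepsilon})(q^{\ell,\varepsilon})^2$, leaving only $\half f''(u^{\ell,\varepsilon})\,q^{\ell,\varepsilon}|q^{\ell,\varepsilon}|$. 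Crucially, I would work with the positive part $q_+^{\ell,\varepsilon} \eqdef \max(q^{\ell,\varepsilon},0)$ rather than with $|q^{\ell,\varepsilon}|$ itself, so that the zeroth-order term $P^\varepsilon$ — which is \emph{a priori} of the wrong size as $\ell \to 0$ — enters with the favorable sign $-\int_{\{q^{\ell,\varepsilon}>0\}} P^\varepsilon \leqslant 0$, on account of $P^\varepsilon \geqslant 0$ (established as in \eqref{P:positive}), and can thus simply be discarded.

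Concretely, I would test \eqref{rBep_x} against a smooth convex approximation $\beta_\delta$ of the map $s \mapsto \max(s,0)$, with $\beta_\delta' \geqslant 0$, $\beta_\delta'' \geqslant 0$ and $\beta_\delta' \to \mathbf 1_{\{s>0\}}$. The viscous term then contributes $-\varepsilon\int \beta_\delta''(q^{\ell,\varepsilon})(\partial_x q^{\ell,\varepsilon})^2 \leqslant 0$ and is discarded, while the source contributes $-\int \beta_\delta'(q^{\ell,\varepsilon})\,P^\varepsilon \leqslant 0$ since both factors are nonnegative. Passing to the limit $\delta \to 0$, I expect to obtain
\[
\frac{\ud}{\ud t}\int_\R q_+^{\ell,\varepsilon}\,\ud x \;\leqslant\; \half\int_\R f''(u^{\ell,\varepsilon})\,\big(q_+^{\ell,\varepsilon}\big)^2\,\ud x \;\leqslant\; \frac{c_2}{2}\,\kappa(t)\int_\R q_+^{\ell,\varepsilon}\,\ud x,
\]
where $\kappa(t) \eqdef \big(\tfrac{c_1}{2}t + \tfrac1M\big)^{-1}$ is the Ole\u{\i}nik bound from \eqref{Oleinikellep}, and I used the pointwise inequality $\big(q_+^{\ell,\varepsilon}\big)^2 \leqslant \kappa(t)\,q_+^{\ell,\varepsilon}$ together with $f'' \leqslant c_2$.

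It then remains to integrate this Gronwall inequality and to convert the control of $\int q_+^{\ell,\varepsilon}$ into one of $\big\|q^{\ell,\varepsilon}\big\|_{L^1}$. Since $u^{\ell,\varepsilon}(t,\cdot) \in H^1(\R)$ vanishes at infinity by the uniform energy bound \eqref{eneequation}, one has the conservation $\int_\R q^{\ell,\varepsilon}\,\ud x = 0$, whence $\int_\R |q^{\ell,\varepsilon}|\,\ud x = 2\int_\R q_+^{\ell,\varepsilon}\,\ud x$. Computing $\int_0^t \kappa(s)\,\ud s = \tfrac{2}{c_1}\log\!\big(1 + \tfrac{c_1 M t}{2}\big)$, the differential inequality yields a power-type growth of the form $\big(1 + \tfrac{c_1 M t}{2}\big)^{c_2/c_1}$ for $\int_\R q_+^{\ell,\varepsilon}$; combining this with the identity above and with the mollifier bound $\big\|\partial_x u_0^\varepsilon\big\|_{L^1} = \big\|\varphi_\varepsilon * u_0'\big\|_{L^1} \leqslant \big\|u_0'\big\|_{L^1}$ produces a bound dominated by the right-hand side of \eqref{TVep} (the resulting exponent $c_2/c_1$ being in particular no larger than $2c_2/c_1$, as the base is $\geqslant 1$), uniformly in $(\ell,\varepsilon)\in(0,1]\times(0,1]$.

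The main obstacle is the rigorous justification of these formal sign manipulations, namely legitimizing the test against the non-smooth multiplier $\mathbf 1_{\{q^{\ell,\varepsilon}>0\}}$, confirming the dissipativity of the viscous term and the nonpositivity of the $P^\varepsilon$-contribution at the level of the convex regularization $\beta_\delta$, and verifying that all boundary terms in the integrations by parts vanish. All of this is enabled by the parabolic smoothing in \eqref{RB1}, which renders $u^{\ell,\varepsilon}$ smooth with spatial decay inherited from \eqref{eneequation}; the one genuine subtlety is to carry out the limit $\beta_\delta \to \max(\cdot,0)$ while keeping \emph{both} the $P^\varepsilon$ and the viscous terms nonpositive, which is precisely what forces the use of the one-sided quantity $q_+^{\ell,\varepsilon}$ in place of $|q^{\ell,\varepsilon}|$ and the reliance on the conservation $\int_\R q^{\ell,\varepsilon}=0$ to recover the full $L^1$ norm.
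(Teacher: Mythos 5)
Your proposal is correct, but it takes a genuinely different route from the paper on the one point that matters. The paper works with the full absolute value, testing \eqref{rBep_x} against $S_\delta'(q^{\ell,\varepsilon})$ with $S_\delta \approx |q|$; it must then face the term $-P^\varepsilon S_\delta'(q^{\ell,\varepsilon})$, which has the \emph{wrong} sign on $\{q^{\ell,\varepsilon}<0\}$, and it neutralizes it by converting $P^\varepsilon$ back through the elliptic relation $-\ell^2\partial_x^2 P^\varepsilon = \half f''(q^{\ell,\varepsilon})^2 - P^\varepsilon \leqslant \half c_2 (q^{\ell,\varepsilon})^2$ and invoking the Ole\u{\i}nik bound on $\{q^{\ell,\varepsilon}>\delta\}$; this costs a doubled Gr\"onwall coefficient $c_2\kappa(t)$ and hence the exponent $2c_2/c_1$. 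You instead test against an approximation of $\mathds 1_{\{q>0\}}$, so the $P^\varepsilon$ term is nonpositive and simply discarded, and you recover the full $L^1$ norm from the zero-mean identity $\int_\R q^{\ell,\varepsilon}\,\ud x=0$ (legitimate since $u^{\ell,\varepsilon}(t)\in H^1(\R)$ vanishes at infinity, once $\int q_+^{\ell,\varepsilon}<\infty$ forces $q_-^{\ell,\varepsilon}\in L^1$ as well). This buys a cleaner argument --- no elliptic-equation manipulation of $P^\varepsilon$ at all --- and a strictly sharper exponent $c_2/c_1$, which indeed dominates the stated bound since the base is $\geqslant 1$. Two small points to tighten: (i) the differentiation $\frac{\ud}{\ud t}\int q_+^{\ell,\varepsilon}$ after the limit $\delta\to0$ is formal; the safe order is to close Gr\"onwall at fixed $\delta$ (using $q_+^2 \leqslant 2(\delta+\kappa(t))\beta_\delta(q)$ pointwise, which gives exponent $2c_2/c_1$ and, after monotone convergence, the finiteness of $\int q_+^{\ell,\varepsilon}(t)$), and only then, if you want $c_2/c_1$, rerun the integral-form Gr\"onwall on $F(t)=\int q_+^{\ell,\varepsilon}(t)$; (ii) your initial-data accounting needs the factor-of-two bookkeeping $\int (q_0^\varepsilon)_+ = \half\|q_0^\varepsilon\|_{L^1}\leqslant \half \|u_0'\|_{L^1}$, which pairs exactly with $\|q^{\ell,\varepsilon}(t)\|_{L^1}=2\int q_+^{\ell,\varepsilon}(t)$ to reproduce \eqref{TVep} without loss. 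With these adjustments your argument is complete and at the same level of rigor as the paper's.
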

\begin{rem}
	Note that, by one-dimensional Sobolev embeddings, the bound stated in the preceding lemma implies the control 
\begin{equation}\label{Linfep}
\left\| u^{\ell, \varepsilon}  (t)\right\|_{L^\infty(\mathbb{R})}    \leqslant \left\| u_0 \right\|_{BV(\mathbb{R})}\left( \frac{c_1  M  t}{2}  +  1 \right)^{ \frac{2c_2}{c_1}},
\end{equation} 
for all $t\geqslant 0$.
\end{rem}

\begin{proof} 
We begin with introducing the function
\begin{equation}\label{Sdef}
S_\delta(q) \eqdef 
\begin{cases}
-q - \half \delta, & q \in (-\infty,-\delta), \\
{\textstyle \frac{1}{2\delta}} q^2, & q \in [-\delta, \delta], \\
q - \half \delta, & q \in (\delta, \infty),
\end{cases}
\end{equation}
 for any $\delta \in (0,1]$. Accordingly, it is readily seen that 
\begin{equation}\label{Bound:a}
	\left| q S_\delta(q  )- q  ^2 S_\delta'(q  ) \right| \leqslant  \delta S_\delta(q)
\end{equation}
and
\begin{equation}\label{|q|S2}
|q| \mathds{1}_{\{ |q| \geqslant \delta \}} \leqslant  S_\delta(q) + \half \delta \mathds{1}_{\{ |q| \geqslant \delta \}} , \qquad 
|q| \mathds{1}_{\{ |q| \geqslant \delta \}} \leqslant 2 S_\delta(q),
\end{equation} 
for any $q\in \mathbb{R}$. Additionally, one can easily check that 
$$S_\delta(q) \lesssim_\delta q^2,$$
 which, in view  of the energy bound \eqref{eneequation}, yields that
  $S_\delta(q^{\ell,\varepsilon})(t,\cdot) \in L^1(\R)$ for all $\delta,\ell,\varepsilon \in (0,1]$ and any $t \geqslant 0$. 
  
  Next, we want to establish a uniform bound,  with respect to the parameters $\ell$ and $\delta$, on the preceding  $L^1$ control of $S_\delta(q^{\ell,\varepsilon})(t,\cdot)$. To that end,  multiplying \eqref{rBep_x} by $S'_\delta(q^{\ell,\varepsilon})$ yields, in view of the second equation in \eqref{rBep}, that 
\begin{align}\nonumber
\partial_t S_\delta (q^{\ell,\varepsilon}) + \partial_x \left( f'(u^{\ell,\varepsilon}) S_\delta(q^{\ell,\varepsilon}) \right) 
&= f''(u^{\ell,\varepsilon}) \left( q^{\ell,\varepsilon} S_\delta(q^{\ell,\varepsilon})-  (q^{\ell,\varepsilon})^2 S_\delta'(q^{\ell,\varepsilon}) \right) - \ell^2  S_\delta'(q^{\ell,\varepsilon}) \partial_x^2 P^{\ell,\varepsilon}\\  \label{S(q)}
& \quad +\varepsilon \partial_x \left( S'_\delta(q^{\ell,\varepsilon}) \partial_x q^{\ell,\varepsilon} \right) - {\textstyle \frac{1}{\delta}} \mathds{1}_{|q^{\ell,\varepsilon}| \leqslant \delta} (\partial_x q^{\ell,\varepsilon})^2.
\end{align}
Next, writing 
\begin{align*}
- \ell^2  S_\delta'(q^{\ell,\varepsilon}) \partial_x^2 P^{\ell,\varepsilon} &=  \ell^2\left(    \mathds{1}_{\{q^{\ell,\varepsilon} < - \delta\}}   
 - { \frac{ q^{\ell,\varepsilon}}{\delta}} \mathds{1}_{\{|q^{\ell,\varepsilon}| \leqslant  \delta\}} 
-  \mathds{1}_{\{q^{\ell,\varepsilon} > \delta\}} \right) \partial_x^2 P^{\ell,\varepsilon} \\
& = \ell^2 \left(1 
 - \left( 1 + { \frac{ q^{\ell,\varepsilon}}{\delta}}\right)   \mathds{1}_{\{|q^{\ell,\varepsilon}| \leqslant  \delta\}}
- 2   \mathds{1}_{\{q^{\ell,\varepsilon} > \delta\}}  \right)  \partial_x^2 P^{\ell,\varepsilon} 
\end{align*}
and making use of the fact that 
\begin{equation*}
- \ell^2 \partial_x^2 P^{\ell,\varepsilon} = \half f''(q^{\ell,\varepsilon}) (q^{\ell,\varepsilon})^2 - P^{\ell,\varepsilon} \leqslant \half c_2 (q^{\ell,\varepsilon})^2 ,
\end{equation*}
which is a direct consequence of \eqref{assum:flux}, \eqref{RB1} and that $P^{\ell,\varepsilon} \geqslant 0$, we find that 
\begin{equation*}
	-\ell^2 \int_\R S_\delta'(q^{\ell,\varepsilon}) \partial_x^2 P^{\ell,\varepsilon}\, \ud x
	\leqslant {\textstyle \frac{c_2}{2}} \int_\R \left(  \left( 1 + { \frac{ q^{\ell,\varepsilon}}{\delta}} \right) \mathds{1}_{\{|q^{\ell,\varepsilon}| \leqslant  \delta\}} + 2 \mathds{1}_{\{q^{\ell,\varepsilon} > \delta\}} \right) (q^{\ell,\varepsilon})^2\, \ud x.
\end{equation*}
Therefore, simplifying the right-hand side by noticing, by definition of $S_\delta(q)$, that 
\begin{equation*}
	  \left( 1 + { \frac{ q^{\ell,\varepsilon}}{\delta}} \right)  |q^{\ell,\varepsilon}|^2\mathds{1}_{\{|q^{\ell,\varepsilon}| \leqslant  \delta\}} \leqslant 4\delta S_\delta(q^{\ell,\varepsilon})   
\end{equation*}
and, by virtue of \eqref{|q|S2} and Lemma \ref{Lem:Oleinik}, that
\begin{equation*}
	  \begin{aligned}
	  2	|q^{\ell,\varepsilon}|^2 \mathds{1}_{\{q^{\ell,\varepsilon} > \delta\}} 
	  	& \leqslant 2 \left(S_\delta(q^{\ell,\varepsilon}) + \half\delta \mathds{1}_{\{ |q^{\ell,\varepsilon}| \geqslant \delta \}} \right) q^{\ell,\varepsilon} \mathds{1}_{\{q^{\ell,\varepsilon} > \delta\}} 
	  	\\
	  	& \leqslant  \frac{2}{\frac{c_1  }{2} t  +  \frac{1}{M}}S_\delta(q^{\ell,\varepsilon}) +   2 \delta S_\delta(q^{\ell,\varepsilon}) ,
	  \end{aligned}
\end{equation*}
yields in the end that

 \begin{align*}
-\ell^2 \int_\R S_\delta'(q^{\ell,\varepsilon}) \partial_x^2 P^{\ell,\varepsilon}\, \ud x
&\leqslant   c_2 \left( 
3\delta  +    \frac{1}{ \frac{c_1 t}{2}   + \frac{1}{M}}  \right)  \int_\R  S_\delta(q^{\ell,\varepsilon})  \ud x.
\end{align*}
In account of that, integrating \eqref{S(q)} and utilizing \eqref{assum:flux} together with \eqref{Bound:a} to estimate the first term in its right-hand side leads to the bound
\begin{equation*}
\frac{\ud\ }{\ud t} \int_\mathbb{R} S_\delta(q^{\ell,\varepsilon})\, \mathrm{d}x \leqslant  \left( 
4 c_2 \delta   +  { \frac{ c_2}{ \frac{c_1 t}{2}   + \frac{1}{M}}} \right)    \int_\mathbb{R} S_\delta(q^{\ell,\varepsilon})\, \mathrm{d}x,
\end{equation*}
which implies, by Gr\"onwall's inequality, that   
\begin{equation*} 
\int_\mathbb{R} S_\delta(q^{\ell,\varepsilon})\, \mathrm{d}x \leqslant \mathrm{e}^{4 c_2 \delta t} \left( \frac{c_1  M  t}{2}  +  1 \right)^{\frac{2c_2}{c_1}  } \int_\mathbb{R}  |\partial_x u_0^\ell|\, \mathrm{d}x.
\end{equation*}
In the end, taking the limit $\delta \to 0$ and using the monotone convergence theorem with \eqref{u0ell1} completes the proof of the lemma. 
\end{proof}

\subsection{Uniform bounds on $u^{\varepsilon}$}\label{section:uniform:2}

For  $\varepsilon>0$, we consider here $u^\varepsilon$ the solution of the viscous scalar conservation law \eqref{RSC1}. For clarity, we are going to  recast the estimates on $u^\varepsilon$, without detailed justification.
The estimates presented below are well-known, we refer the reader to \cite{B00,D05,HR15,K70}  for additional details on the viscous approximation of hyperbolic equations.
Moreover, we emphasize that the same arguments presented in the preceding section can be employed here as well.

 The equation \eqref{RSC1} is globally well-posed, and $L^p$ norms satisfy the maximum principle 
\begin{equation}\label{Lp}
\|u^\varepsilon\|_{L^\infty([0,\infty), L^p(\R))} \leqslant \|u_0\|_{L^p (\R)},
\end{equation}
for any $p \in [1, \infty]$, as soon as the initial datum belongs to $L^p(\mathbb{R})$. In our case, due to the assumption that $u_0\in H^1(\mathbb{R})$ and the embedding $H^1(\mathbb{R}) \hookrightarrow L^2(\mathbb{R})\cap L^\infty(\mathbb{R})$, the bound \eqref{Lp} holds for any $p \in [2,\infty]$.
Moreover, we can show that the total variation of $u^\varepsilon$ is decreasing in time, i.e.,   
\begin{equation}\label{TVep0} 
\int_\R |\partial_x u^\varepsilon(t,x)|\, \ud x \leqslant \int_\R |\partial_x u_0^\varepsilon(x)|\, \ud x \leqslant \|u_0\|_{BV(\R)},
\end{equation}
for all $t\geqslant 0$.
Additionally, the solution of \eqref{RSC1} satisfies the one-sided Ole\u{\i}nik inequality 
\begin{equation}\label{Ol:vscl}
\partial_x u^\varepsilon(t,x) \leqslant \frac{1}{c_1  t +  \frac{1}{M}},
\end{equation}
for all $(t,x) \in (0,\infty) \times \R$, where  $M$ is defined in \eqref{u0:COND}.
Finally, one can show that, as $\varepsilon \to 0$ and up to a subsequence, we have the convergence 
\begin{equation}\label{vanishingviscosity0}
u^\varepsilon \to u \quad \mathrm{in}\ C([0,T];L^p_{\text{loc}}(\R)),
\end{equation}
for any $p\in [1,\infty)$ and any $T>0$, where $u$ is the unique entropy solution of the scalar conservation law \eqref{SCL}.

\section{Decay estimates   and Convergence}\label{section:decay}

\subsection{A crucial decay estimate} \label{section:decay:estimate} An important milestone in our proof of the strong compactness in Lebesgue spaces consists of the analysis of the same problem in a low regularity space. Interestingly, we are going to show that the convergence in a $\dot {W}^{-1,1}$-like space comes with a rate. This turns out to be a consequence of Proposition \ref{mu=0} below, which     can be considered as the main new contribution in this section, improving on previous results by the first author.
More precisely, in \cite{G23}, the first author proved that, for any $T>0$, any compact set $K \subset    \R$ and any $\alpha \in (0,\frac{2}{3})$, there exists a constant $C_{T,K, \alpha}>0$ such that 
\begin{align*}
\int_0^T \int_K \ell^2  P^{\ell,\varepsilon}\,  \mathrm{d}x\,  \mathrm{d}t &\leqslant \ell^\alpha  C_{T,K,\alpha},
\end{align*}
 for all $\varepsilon>0$, any $\ell \in (0,1]$. Here, we provide a twofold improvement on   the latter bound by showing its validity  for   $\alpha=1$ and   by also allowing the integral on $x$ to be effective  over the whole real line $  \R$ instead of a compact set $K$. This is the content of the next proposition which is in the crux of  the   key findings in this paper.  
\begin{pro}\label{mu=0}  
    Assume that  \eqref{u0:COND}, \eqref{u0ell1} and \eqref{u0ell2} hold for some  $u_0$ and $u_0^\ell$.
 Further  assume that the flux $f$ fulfills the uniform convexity condition  \eqref{assum:flux}. Then,   for all $T>0$, there exists $C_{T,u_0,c_1,c_2}>0$ such that  
\begin{align*} 
\int_0^T \int_\R \ell^2 P^{\ell,\varepsilon} \, \mathrm{d}x\,  \mathrm{d}t &\leqslant \ell C_{T,u_0,c_1,c_2},
\end{align*}
 for all $\ell \in (0, 1]$.
\end{pro} 
\begin{proof}
We proceed in four steps by establishing:
\begin{enumerate}
	\item A uniform bound on $ \ell^2  |q^{\ell,\varepsilon}|  P^{\ell,\varepsilon}$ in $L^1\left([0,\infty) \times \mathbb{R}\right)$, by a suitable energy estimate. 
	\item A uniform bound on $ \ell^2  |q^{\ell,\varepsilon}|^\beta  P^{\ell,\varepsilon}$ in $L^1\left([0,T) \times \mathbb{R}\right)$, for any $T\in (0,\infty]$ and any $\beta \in (\frac 23,1)$, by an interpolation argument.
	\item A uniform bound on $\ell ^2|q^{\ell,\varepsilon}|^{2+\beta}$ in   $L^1\left([0,T) \times \mathbb{R}\right)$, for any finite $T>0$, by estimating differently the case of the barely positive    values of $q^{\varepsilon,\ell}$ --- by the aid of the one-sided Ole\u{\i}nik inequality \eqref{Oleinikellep} ---, and the remaining range of its values --- by a constructive energy method.
	\item A decay rate for order $\ell $ for $ \ell^2    P^{\ell,\varepsilon}$ in $L^1\left([0,T] \times \mathbb{R}\right)$, for any finite time $T>0$,  concluding the proof by ``bootstrapping'' all the bounds shown to hold so far. 
\end{enumerate}

\subsection*{Step 1}
 Multiplying \eqref{rBep_x} by $\ell^2 |q^{\ell,\varepsilon}|$ we obtain that
\begin{equation*}
  \begin{aligned}
  	\partial_t \left(\half  \ell^2  q^{\ell,\varepsilon}  |q^{\ell,\varepsilon}| \right)
  	 &+ \partial_x \left(\half  \ell^2  f'(u^{\ell,\varepsilon})  q^{\ell,\varepsilon}  |q^{\ell,\varepsilon}| \right) + \ell^2  |q^{\ell,\varepsilon}|  P^{\ell,\varepsilon} 
  	\\
  	& \qquad \qquad = \varepsilon \ell^2 \partial_x \left( |q^{\ell,\varepsilon}| \partial_x q^{\ell,\varepsilon}\right) - \varepsilon \ell^2 \mathrm{sign} (q^{\ell,\varepsilon}) (\partial_x q^{\ell,\varepsilon})^2.
  \end{aligned}
\end{equation*}
Therefore, integrating in time and space, we find that 
\begin{align*}
\int_{(0,\infty) \times \mathbb{R}}\ell^2  |q^{\ell,\varepsilon}|  P^{\ell,\varepsilon} \, \mathrm{d}x\,  \mathrm{d}t 
\leqslant &  \half  \ell^2\left( \int_{\R}    q_0^{\ell,\varepsilon}  |q_0^{\ell,\varepsilon}| \, \mathrm{d}x \, \mathrm{d}t  - \lim_{t \to \infty} \int_{\R}  q^{\ell,\varepsilon}  |q^{\ell,\varepsilon}| \, \mathrm{d}x \, \mathrm{d}t \right)\\
&
+ \varepsilon  \ell^2  \int_{(0,\infty) \times \mathbb{R}}  (\partial_x q^{\ell,\varepsilon})^2\, \mathrm{d}x\,  \mathrm{d}t.
\end{align*}
Thus, in view of \eqref{eneequation}, we arrive at the bound 
\begin{equation}\label{a}
\int_{(0,\infty) \times \mathbb{R}} \ell^2  |q^{\ell,\varepsilon}|  P^{\ell,\varepsilon} \, \mathrm{d}x\,  \mathrm{d}t \lesssim_{u_0} 1 ,
\end{equation}
for all $\varepsilon >0$ and $\ell \in (0,1]$.

\subsection*{Step 2} Let  $k \in \mathbb{N}^\ast$ be fixed and we introduce 
$$\beta \bydef  \frac{2k}{2k+1} \in \left [\tfrac 23,1\right ).$$ 
For any $q \in \R$, we define $q^\beta = (q^{2 k})^{\frac{1}{2k+1}} \geqslant 0$.
Accordingly, we write by H\"older's inequality that  
\begin{equation*}
\int_0^T \int_\R \ell^2  (q^{\ell,\varepsilon})^\beta  P^{\ell,\varepsilon} \, \mathrm{d}x \, \mathrm{d}t \leqslant  \left( \int_0^T \int_\R \ell^2  |q^{\ell,\varepsilon}|  P^{\ell,\varepsilon} \,  \mathrm{d}x \, \mathrm{d}t \right)^\beta \left(\int_0^T \int_\R \ell^2  P^{\ell,\varepsilon} \, \mathrm{d}x \, \mathrm{d}t \right)^{1-\beta}.
\end{equation*}
Therefore, it follows from \eqref{a} that  
\begin{equation}\label{b}
\int_0^T \int_\R \ell^2  (q^{\ell,\varepsilon})^\beta  P^{\ell,\varepsilon} \,  \mathrm{d}x\,  \mathrm{d}t \lesssim_{u_0} \left(\int_0^T \int_\R \ell^2  P^{\ell,\varepsilon} \, \mathrm{d}x\,  \mathrm{d}t \right)^{1-\beta}.
\end{equation}
In view of \eqref{ell2P}, the preceding control provides us with a uniform bound on $\ell^2  (q^{\ell,\varepsilon})^\beta  P^{\ell,\varepsilon}$. However, in the next step of the proof, we shall make use of the more precise estimate \eqref{b}  in order to obtain the desired decay, as $\ell \to 0$, of the term in its right-hand side.

\subsection*{Step 3} Our aim in this step is to obtain a bound on $\ell ^2|q^{\ell,\varepsilon}|^{2+\beta}$ in $ L^1_{t,x}$.
We   begin with   noting that 
\begin{equation*}
   \begin{aligned}
   	\int_0^T \int_{\mathbb{R}}   |q^{\ell,\varepsilon}|^{2+\beta}\mathds{1}_{\{q^{\ell,\varepsilon} \geqslant -1\}}\, \ud x\, \ud t 
   	&\leqslant    \int_0^T \int_{\mathbb{R}}  |q^{\ell,\varepsilon}|^{\beta+2}\mathds{1}_{\{q^{\ell,\varepsilon} \geqslant 0\}} \, \ud x\, \ud t   
   	\\
   	&\quad + \int_0^T \int_{\mathbb{R}}   |q^{\ell,\varepsilon}|^{\beta+2}\mathds{1}_{\{ |q^{\ell,\varepsilon} |\leqslant 1\}} \, \ud x\, \ud t 
   	\\
   		&\leqslant   \left( M^{1+\beta } + 1 \right)  \int_0^T \int_{\mathbb{R}}  |q^{\ell,\varepsilon}| \, \ud x\, \ud t  ,
   \end{aligned}	
\end{equation*}
where we employed the one-sided Ole\u{\i}nik inequality \eqref{Oleinikellep} in the second estimate. Thus, we deduce, in view of \eqref{TVep}, that 
\begin{equation}\label{bound:step3:A}
		\ell^2\int_0^T \int_{\mathbb{R}}   |q^{\ell,\varepsilon}|^{2+\beta}\mathds{1}_{\{q^{\ell,\varepsilon} \geqslant -1\}} \, \ud x\, \ud t  \lesssim _{T,u_0,c_1,c_2} 	\ell^2,
\end{equation}
for any finite time $T>0$. Now we take care of the the case where $ q^{\ell,\varepsilon}<-1$. To that end, we introduce the function 
\begin{equation*}
	G(q) \bydef \frac{1}{1+\beta }
	\begin{cases}
|q|^{1+\beta }, & q \in (-\infty,-1], \\
(1-\beta) q^3 + (2-\beta) q^2 , & q \in [-1, 0], \\
0, & q \in [0, \infty),
\end{cases}
\end{equation*}
and we emphasize that, by a straightforward computation, one can show that 
\begin{equation}\label{G:positive}
	0\leqslant G(q) \leqslant 2|q|^{1+\beta}  , \quad \text{for all  }\  q \in \mathbb{R},
\end{equation}
and that $G$ is twice differentiable almost every where with 
\begin{equation}\label{G':bound}
	0 \leqslant  -G'(q) \leqslant 5|q|^{\beta} , \quad \text{for all } \ q\in \mathbb{R}
\end{equation}
and
\begin{equation}\label{G:sec:positive}
	G''(q) \geqslant 0, \quad \text{for almost all } \ q\in \mathbb{R}.
\end{equation}
Now, multiplying \eqref{rBep_x} by $G'(q^{\ell,\varepsilon})$ and rearranging the resulting terms yields that 
\begin{equation}\label{A:equation}
	\begin{aligned}
	f''(u^{\ell,\varepsilon})	\mathcal{A} (q^{\ell,\varepsilon})
		& = - \partial_t \left(G(q^{\ell,\varepsilon})\right) -  \partial_x\left( f'(u^{\ell,\varepsilon})G(q^{\ell,\varepsilon}) \right) - G'(q^{\ell,\varepsilon}) P^{\ell,\varepsilon}
		\\
		&\quad  + \varepsilon  \partial_x\left(   G'(q^{\ell,\varepsilon})  \partial_x q^{\ell,\varepsilon}\right) - \varepsilon G''(q^{\ell,\varepsilon}) (\partial_x q^{\ell,\varepsilon})^2,
	\end{aligned}
\end{equation}
where we denoted 
\begin{equation*}
	\begin{aligned}
		\mathcal{A} (q^{\ell,\varepsilon})
		&\bydef  \half (q^{\ell,\varepsilon})^2 G'(q^{\ell,\varepsilon}) - q^{\ell,\varepsilon} G(q^{\ell,\varepsilon})
	= \frac{(1-\beta)}{2(1+\beta)}
	\begin{cases}
|q^{\ell,\varepsilon}|^{2+\beta }, & q \in (-\infty,-1], \\
|q^{\ell,\varepsilon}|^4   , & q \in [-1, 0], \\
0, & q \in [0, \infty),
\end{cases}
	\end{aligned}
\end{equation*}
whereby, due to the convexity condition \eqref{assum:flux}, it is readily seen that 
\begin{equation*}
	f''(u^{\ell,\varepsilon})	\mathcal{A} (q^{\ell,\varepsilon})
	\geqslant  \frac{c_1(1-\beta)}{2(1+\beta)} |q^{\ell,\varepsilon}|^{2+\beta } \mathds{1}_{\{q\leqslant-1 \}} .
	\end{equation*}
Therefore, integrating \eqref{A:equation} in space and time and making use of \eqref{G:positive}, \eqref{G':bound} and \eqref{G:sec:positive}, dropping the terms having a good sign, infers that 
\begin{equation*}
	\begin{aligned}
		\int_0^T \int_{\mathbb{R}} |q^{\ell,\varepsilon}|^{2+\beta } \mathds{1}_{\{q\leqslant-1 \}}  \, \ud x\, \ud t
		& \lesssim_{\beta,c_1}  \int_{\mathbb{R}} G(q^{\ell,\varepsilon}_0)\, \ud x - \int_0^T \int_{\mathbb{R}} G'(q^{\ell,\varepsilon})P^{\ell,\varepsilon} \, \ud x\, \ud t
		\\
		& 
		\lesssim_{\beta,c_1}  \int_{\mathbb{R}} |q^{\ell,\varepsilon}_0|^{1+\beta} \, \ud x + \int_0^T \int_{\mathbb{R}} |q^{\ell,\varepsilon}|^{\beta}P^{\ell,\varepsilon} \, \ud x\, \ud t.
	\end{aligned}
\end{equation*}
Since $\beta\in [2/3,1)$, employing H\"older's inequality, we end up with 
\begin{align*}
	\ell^2\int_0^T \int_{\mathbb{R}} |q^{\ell,\varepsilon}|^{2+\beta } \mathds{1}_{\{q\leqslant-1 \}}\, \ud x\, \ud t  
&\lesssim_{\beta,c_1}
  \ell^{2-2\beta} \norm{\partial_x u_0^\ell}_{L^1}^{1-\beta} \left( \ell \norm{\partial_x u_0^\ell}_{L^2}\right)^{2\beta}\\ 
&\qquad + \ell^2\int_0^T \int_{\mathbb{R}} |q^{\ell,\varepsilon}|^{\beta}P^{\ell,\varepsilon} \, \ud x\, \ud t.
\end{align*}  
All in all, in combination with \eqref{bound:step3:A} and by employing \eqref{b} and \eqref{u0ell2}, we deduce that 
\begin{equation} \label{q:beta+2:bound}
	\begin{aligned}
	\ell^2	\int_0^T \int_{\mathbb{R}} |q^{\ell,\varepsilon}|^{2+\beta }  \, \mathrm{d}x\,  \mathrm{d}t
		& \lesssim _{T,u_0,c_1,c_2, \beta}   \ell^{2-2\beta} +\left(\int_0^T \int_\R \ell^2  P^{\ell,\varepsilon} \, \mathrm{d}x\,  \mathrm{d}t \right)^{1-\beta}.
	\end{aligned} 
\end{equation}
	
	\subsection*{Step 4} We are now in position to conclude the proof of the proposition. To that end, we first write by H\"older inequality that
	\begin{equation*} 
	\begin{aligned}
		\int_0^T \int_\R \ell^2 |q^{\ell,\varepsilon}|^{2}  \, \ud x  \, \ud t
&\leqslant  \ell^\frac{2 \beta}{1+\beta} \left(\int_0^T \int_\R |q^{\ell,\varepsilon}|  \, \ud x  \, \ud t\right)^\frac{\beta}{1+\beta} \left(\int_0^T \int_\R \ell^2  (q^{\ell,\varepsilon})^{2+\beta}  \, \ud x  \, \ud t\right)^\frac{1}{1+\beta}\\
&\lesssim_{T,u_0,c_1,c_2,\beta} \ell^\frac{2 \beta}{1+\beta}  \left(\int_0^T \int_\R \ell^2  (q^{\ell,\varepsilon})^{2+\beta}  \, \ud x  \, \ud t\right)^\frac{1}{1+\beta}.
	\end{aligned}
\end{equation*}
Therefore, it follows by virtue of \eqref{ell2P} that
\begin{equation*}
   \begin{aligned}
   	  \left(
\int_0^T \int_\R \ell^2  P^{\ell,\varepsilon}  \, \ud x  \, \ud t\right)^{1+\beta} 
& \lesssim_{c_2} \left( \int_0^T \int_\R \ell^2 |q^{\ell,\varepsilon}|^{2}  \, \ud x  \, \ud t\right)^{1+\beta}
\\
&\lesssim_{T,u_0,c_1,c_2,\beta} \ell^{2 \beta}    \int_0^T \int_\R \ell^2  (q^{\ell,\varepsilon})^{2+\beta}  \, \ud x  \, \ud t,
   \end{aligned}	
\end{equation*}
whereby we deduce, by substituting    \eqref{q:beta+2:bound}, that 
\begin{equation*}
	\left(
\int_0^T \int_\R \ell^2  P^{\ell,\varepsilon}  \, \ud x  \, \ud t\right)^{1+\beta} \lesssim_{T,u_0,c_1,c_2,\beta}      \ell^{2} + \ell^{2\beta }  \left(\int_0^T \int_\R \ell^2  P^{\ell,\varepsilon}  \, \ud x  \, \ud t \right)^{1-\beta}.
\end{equation*}
Hence, writing, by Young inequality, for any $\lambda>0$, that
\begin{equation*}
	\left(
\int_0^T \int_\R \ell^2  P^{\ell,\varepsilon}  \, \ud x  \, \ud t\right)^{1+\beta} \lesssim_{T,u_0,c_1,c_2,\beta}     \ell^{2} + C_\lambda\ell ^{1+\beta }  + \lambda \left(\int_0^T \int_\R \ell^2  P^{\ell,\varepsilon}  \, \ud x  \, \ud t \right)^{1+\beta}
\end{equation*}
and choosing $\lambda$ as small as it is needed to absorb the last term in the right-hand side by the right hand side concludes the proof of the proposition.
\end{proof}

\subsection{Convergence in a low regularity space}\label{section:convergence:low}

In this section, we establish a stability estimate for the difference 
\begin{equation*}
	w^{\ell,\varepsilon} \eqdef u^{\ell,\varepsilon} - u^{\varepsilon}
\end{equation*}
in a low regularity space. This will be done by particularly studying the evanescence of the   fluctuation 
\begin{equation*}
	 \zeta  ^{\ell,\varepsilon}(t,x) \eqdef \int_{-\infty }^x w^{\ell,\varepsilon}(t,y) \, \ud y, \qquad (t,x) \in (0,\infty)\times \mathbb{R},
\end{equation*}
in $L^\infty_tL^1_x$, as $\ell $ tends to zero, which   crucially builds upon the decay estimate from the preceding proposition. This is the content of the next proposition.
\begin{pro}
 Assume that  \eqref{u0:COND}, \eqref{u0ell1} and \eqref{u0ell2} hold for some  $u_0$ and $u_0^\ell$ satisfying
\begin{equation*}
		x\mapsto \zeta_0^\ell(x) \eqdef \int_{-\infty}^{x } \big (u_0^\ell(y) - u_0(y)\big )\,\ud y  \in L^1(\mathbb{R}),
	\end{equation*}
	for any fixed $\ell \in (0,1]$. Further  assume that the flux $f$ fulfills the uniform convexity condition  \eqref{assum:flux}. Then, it holds that 
 \begin{equation*}
 	 \zeta   ^{\ell, \varepsilon} \in  L^\infty ([0,T];L^1(\mathbb{R}))
 \end{equation*}
 for all $T>0$,  with    
\begin{equation}\label{wep}
\norm { \zeta  ^{\ell,\varepsilon}}_{L^\infty([0,T]; L^1(\R))} \lesssim_{T,u_0,c_1,c_2} \int_\R |\zeta  ^{\ell}_0 (x )|\, \ud x +  \ell.
\end{equation}
\end{pro} 
\begin{proof}
We first observe, in view of  \eqref{RSC1} and \eqref{RB1}, that $w^{\ell,\varepsilon}$ is governed by the equation 
\begin{equation}\label{weq}
\partial_t w^{\ell,\varepsilon} + \partial_x \left(b^{\ell,\varepsilon} w^{\ell,\varepsilon} \right) + \ell^2 \partial_x P^{\ell,\varepsilon} = \varepsilon \partial_x^2 w^{\ell,\varepsilon}, \qquad  w^{\ell,\varepsilon}|_{t=0}=w^{\ell,\varepsilon}_0 \bydef u^{\ell,\varepsilon}_0 - u_0^\varepsilon,
\end{equation}
where we have computed that  
\begin{align*}
f(u^{\ell,\varepsilon}) - f(u^{\varepsilon}) &= \int_0^1 \frac{\ud\ \,  }{\ud r} f \left( r  u^{\ell,\varepsilon}  +  (1-r)  u^{\varepsilon} \right) \ud r\\
&= \left( \int_0^1 f '\left( r  u^{\ell,\varepsilon}  +  (1-r)  u^{\varepsilon} \right) \ud r \right) w^{\ell,\varepsilon}   \bydef  b^{\ell,\varepsilon} w^{\ell,\varepsilon} .
\end{align*}
  Now, noticing that $w^{\ell, \varepsilon}_0 \in L^1(\R)$ by virtue of  the interpolation inequality \eqref{interpolation:L1BV}, one can show by an energy method (similar to the proof of Lemma \ref{lemTV}, for instance), for fixed values of $\ell,\varepsilon \in (0,1]$,  that  
\begin{equation*}
w^{\ell, \varepsilon}  \in L^\infty_{\textrm{loc}}(\mathbb{R}^+; L^1(\R)) ,
\end{equation*}
for any $t\geqslant 0$, where, at this stage, the preceding bound is not necessarily   uniform when $\ell \to 0$.
 Therefore, by  Lebesgue differentiation theorem, the anti-derivative of $w^{\ell,\varepsilon}$, that is $ \zeta  ^{\ell, \varepsilon}$ which is introduced above, is well defined. More precisely, it satisfies that
 \begin{equation}\label{dxW:identity}
 	\partial_x  \zeta  ^{\ell,\varepsilon}(t,x)= w^{\ell,\varepsilon}(t,x), \quad \text{for all } (t,x)\in \mathbb{R}^+\times \mathbb{R},
 \end{equation}
 and  enjoys the bounds
 \begin{equation*}
 	 \zeta  ^{\ell,\varepsilon} \in L^\infty_{\mathrm{loc}} (\mathbb{R}^+; C_b^0(\mathbb{R}) ),
 \end{equation*}
 for any fixed $\ell,\varepsilon\in (0,1]$. 
  Additionally,  thanks to the estimates on $u^{\ell,\varepsilon}$ and $u^{\varepsilon}$ that we previously established in Section \ref{section:UB}, the following bounds
 \begin{equation*}
 	 \partial_x 	 \zeta  ^{\ell,\varepsilon} \in L^\infty (\mathbb{R}^+; L^2 (\mathbb{R}) ) \cap L^\infty_{\textrm{loc}} (\mathbb{R}^+;  \dot{W}^{1,1}(\R)),
\end{equation*} 
 hold uniformly with respect to the parameters $\ell,\varepsilon\in (0,1]$. 

 Next, using the indentity $\zeta^{\ell, \varepsilon}(0,\cdot) = \zeta_0^\ell \ast \varphi_\varepsilon$, we obtain that
$$ \lim_{x\to \infty} \zeta^{\ell, \varepsilon}(0,x)=0.$$
Thus, integrating \eqref{weq} over $\mathbb{R}$ implies that $ \zeta^  {\ell, \varepsilon}$ vanishes at infinity for all time.   Moreover, integrating   \eqref{weq} over $(-\infty,x)$ we deduce that $ \zeta  ^{\ell,\varepsilon}$ is governed by the equation  
\begin{equation}\label{Weq}
\partial_t   \zeta  ^{\ell,\varepsilon} + b^{\ell,\varepsilon} \partial_x  \zeta  ^{\ell,\varepsilon}  + \ell^2  P^{\ell,\varepsilon} = \varepsilon \partial_x^2  \zeta  ^{\ell,\varepsilon}.
\end{equation}
Notice that this equation can be recast as 
\begin{equation*}
	 \zeta  ^{\ell,\varepsilon}(t,\cdot)=   \zeta  ^{\ell,\varepsilon}(0,\cdot) +  \int_0^t \left( - b^{\ell,\varepsilon} \partial_x  \zeta  ^{\ell,\varepsilon}  - \ell^2  P^{\ell,\varepsilon} + \varepsilon \partial_x^2  \zeta  ^{\ell,\varepsilon} \right) (\tau,\cdot) \ud \tau .
\end{equation*}
In view of the aforementioned bounds on $ \zeta  ^{\ell,\varepsilon}$ and the estimates that we established in the previous section, one can show that the right-hand side belongs to $L^\infty_{\textrm{loc}}(\mathbb{R}^+;L^1(\mathbb{R}))$, whereby we deduce that  
 \begin{equation*}
 	 \zeta  ^{\ell,\varepsilon} \in L^\infty _{\text{loc}}(\mathbb{R}^+; L^1(\mathbb{R}) ),
 \end{equation*}
 for any $\ell,\varepsilon\in (0,1]$.
Let now $\delta \in (0,1]$. Considering $S_\delta$ as is previously defined in \eqref{Sdef} and multiplying \eqref{Weq} by $S_\delta'( \zeta  ^{\ell,\varepsilon})$ yields that  
\begin{equation}\label{W2}
\partial_t S_\delta( \zeta  ^{\ell,\varepsilon} ) + \partial_x \left(  b^{\ell,\varepsilon} S_\delta( \zeta  ^{\ell,\varepsilon} ) \right)  +   \ell^2   P^{\ell,\varepsilon}  S_\delta'( \zeta  ^{\ell,\varepsilon} ) \leqslant  \varepsilon   \partial_x \left( S'_\delta( \zeta  ^{\ell,\varepsilon} ) \partial_x  \zeta  ^{\ell,\varepsilon} \right)  +   S_\delta( \zeta  ^{\ell,\varepsilon} ) \partial_x b^{\ell,\varepsilon}.
\end{equation}  
Next, observe that the Ole\u{\i}nik inequalities \eqref{Oleinikellep} and \eqref{Ol:vscl} with \eqref{assum:flux} entail  that
\begin{equation*}
\partial_x b^{\ell,\varepsilon} \leqslant a(t) \eqdef \frac{c_2}{\frac{c_1  }{2} t  +  \frac{1}{M}}.
\end{equation*}
Thus, 
multiplying \eqref{W2} by $\exp \{ -\int_0^t a(s) \ud s \}$ and   integrating over $\R$, and employing the simple observation  $|S_\delta'(\zeta  ^{\ell,\varepsilon})| \leqslant 1$, we find that  
\begin{equation*}
\frac{\ud\ }{\ud t} \left( \mathrm{e}^{-\int_0^t a(s)\, \ud s}  \int_\R S_\delta( \zeta  ^{\ell,\varepsilon}) (t,x)\, \ud x \right) \leqslant    \int_\R \ell^2  P^{\ell,\varepsilon} (t,x)\, \ud x.
\end{equation*}
Therefore, by an integration in time and using Proposition \ref{mu=0}, we arrive at the bound 
\begin{equation*}
 \int_\R S_\delta( \zeta  ^{\ell,\varepsilon}) (t,x)\, \ud x  \lesssim_{T,u_0,c_1,c_2}  \int_\R |\zeta  ^{\ell,\varepsilon} (0,x)|\, \ud x +  \ell \leqslant \int_\R |\zeta  ^{\ell}_0 (x )|\, \ud x +  \ell,
\end{equation*}
where the last inequality follows from the fact that $\zeta^{\ell, \varepsilon}(0,\cdot) = \zeta_0^\ell \ast \varphi_\varepsilon$.
Finally, taking $\delta \to 0$ and using the monotone convergence theorem, we obtain \eqref{wep}, thereby completing the proof of the proposition.
\end{proof}

\subsection{Proof of the main theorem}\label{proof:main:thm} We are now in position to prove the main result of this paper, that is Theorem \ref{main:thm}, which is a direct consequence of the following slightly stronger version.
 
\begin{thm} \label{main:thm:general}
Let   $u$   be the unique entropy solution of the scalar conservation law  \eqref{SCL} with a uniformly convex flux $f$ satisfying \eqref{assum:flux} and  an   initial datum $u_0 \in L^2(\mathbb{R}) \cap BV(\mathbb{R}) $ such that 
 \begin{equation*}
 M \eqdef \sup_{x,y \in \R,\ x \neq y} \frac{u_0(x)-u_0(y)}{x-y} < \infty .
 \end{equation*}
	Let $u^\ell$ be any solution of \eqref{rB}, given by Theorem \ref{thm:existence},    for an initial datum satisfying   
	\begin{equation*} 
\|u_0^\ell\|_{L^2(\R)} \leqslant \|u_0\|_{L^2(\R)}, \qquad \|\partial_x u_0^\ell\|_{L^1(\R)} \leqslant \|u_0\|_{BV(\R)}, \qquad  \sup_{x\in \R}\partial_x u_0^\ell(x) \leqslant M
\end{equation*}
and 
\begin{equation*} 
\ell \|\partial_x u_0^\ell\|_{L^2(\R)} \lesssim_{u_0} 1 ,
\end{equation*} 
	uniformly in $\ell \in (0,1]$.  
 Assume further that 
	\begin{equation*}
		x\mapsto \zeta_0^\ell(x) \eqdef \int_{-\infty}^{x } \big (u_0^\ell(y) - u_0(y)\big )\,\ud y  \in L^1(\mathbb{R}),
	\end{equation*}
	for any  $\ell \in (0,1]$.
   Then, it holds, for any $T>0$,  that \begin{equation}\label{uell-u2}
\norm{u^\ell - u}_{L^\infty ([0,T];L^p(\R))} \lesssim_{u_0,T,c_1,c_2} \left(  \norm {\zeta_0^\ell}_{L^1(\mathbb{R})} +  \ell \right) ^\frac{1}{2 p},
\end{equation}
for any $\ell \in (0,1]$ and $p \in [1, \infty)$.
\end{thm} 
\begin{rem}
Notice that Theorem \ref{main:thm} is recovered by simply taking $u^\ell_0 = u_0$, for all $\ell \in (0,1]$.
\end{rem}\begin{proof}
 The proof will be achieved in two steps:
\begin{enumerate}
	\item Convergence of the approximate solutions   in Lebesgue spaces, by an interpolation argument.
	\item Convergence of the exact solutions   in Lebesgue spaces, by local stability with respect to  $\varepsilon$.
\end{enumerate}  
\subsection*{Step 1}  We begin   with writing, in view of the identity \eqref{dxW:identity} and the interpolation inequality \eqref{interpolation:L1BV}  
\begin{equation*}
	\norm {w^{\ell, \varepsilon}(t)}_{L^1(\mathbb{R})}  \lesssim \norm{ \zeta^{\ell,\varepsilon}(t)}_{L^1(\mathbb{R})}^{\frac{1}{2}} \norm{ \partial_x w^{\ell,\varepsilon}(t)}_{L^1(\mathbb{R})}^{\frac{1}{2}},
\end{equation*}
for any $\ell \in (0,1]$ and $t\geqslant 0$. Therefore, applying    \eqref{TVep}, \eqref{TVep0} and  \eqref{wep}, we arrive at the bound 
\begin{equation*}
\norm {w^{\ell, \varepsilon}}_{L^\infty ([0,T]; L^1(\mathbb{R}))} \lesssim_{T,u_0,c_1,c_2}   \left( \int_\R |\zeta_0^{\ell}(x)|\, \ud x +  \ell\right)^\frac{1}{2}.
\end{equation*}
Moreover,   using H\"older inequality with \eqref{Linfep} and \eqref{Lp}, one can also deduce, for any $p\in [1,\infty)$, that 
\begin{equation*}
\norm {w^{\ell, \varepsilon}}_{L^\infty ([0,T]; L^p(\mathbb{R}))} \lesssim_{T,u_0,c_1,c_2}    \left( \int_\R |\zeta_0  ^{\ell} (x)|\, \ud x +  \ell\right)^\frac{1}{2p}.
\end{equation*}
\subsection*{Step 2}  From \eqref{vanishingviscosity} and \eqref{vanishingviscosity0} we deduce that, up to an extraction of  a subsequence, we have the convergence 
\begin{equation*}
w^{\ell, \varepsilon} \to u^\ell - u \quad \mathrm{in}\  C([0,T];L^p_{\text{loc}}(\R)),
\end{equation*} 
as $\varepsilon \to 0$. Therefore, it follows as a consequence of the convergence result from the preceding step that  
\begin{equation*}
\norm {u^\ell(t) - u(t)}_{L^p([-n,n])} \leqslant \liminf_{\varepsilon \to 0} \norm {w^{\ell, \varepsilon}}_{C([0,T];L^p([-n,n]))} \lesssim_{T,u_0,c_1,c_2}     \left( \int_\R |\zeta_0  ^{\ell} (x)|\, \ud x +  \ell\right)^\frac{1}{2p} ,
\end{equation*}
for any $n \in \mathbb{N}^\ast$ and all $t \in [0,T]$. In the end, letting $n \to \infty$ yields the validity of \eqref{uell-u2}, thereby concluding the proof of Theorem  \ref{main:thm}.
\end{proof}

\appendix 

\section{Functional spaces: Interpolation and Embeddings}\label{appendix} 

In this appendix, we agree that $d\geqslant 1$ denotes the dimension of the space variable. We shall collect some general results  which cover the overall functional embeddings that we routinely utilize in this paper.  This mainly involves properties of distributions belonging to Besov and $BV$ spaces, which we recall below. For clarity, let us point out here that one takeaway of this appendix is the justification of the known embedding 
\begin{equation*}
	\mathcal {M} (\mathbb{R}^d) \hookrightarrow \dot B ^{0}_{1,\infty} (\mathbb{R}^d),
\end{equation*}
where $\mathcal{M}(\mathbb{R}^d)$ stands for the space of Radon measures and $\dot B ^{0}_{1,\infty} (\mathbb{R}^d)$ is a homogeneous Besov space, which  implies that
\begin{equation*}
	BV (\mathbb{R}^d) \hookrightarrow  \dot B ^{1}_{1,\infty} (\mathbb{R}^d),
\end{equation*} where $BV$ denotes the space of locally-integrable functions with bounded variations.

\subsection{Besov and Sobolev spaces}
  We denote by $\mathcal{S}'(\mathbb{R}^d)$ the space of all tempered distributions defined on $\mathbb{R}^d$. Moreover, we say that a tempered distribution $f$ belongs to $\mathcal{S}'_h(\mathbb{R}^d)$ if it is not a polynomial near zero. More precisely, if it satisfies that \cite[Definition 1.26]{bcd11}
  \begin{equation*}
  	\lim_{\lambda \to \infty} \norm {\theta(\lambda D) f}_{L^\infty(\mathbb{R}^d)} = 0,
  \end{equation*}
for any $\theta \in \mathcal{D}(\mathbb{R}^d)$, where the symbol $\theta(D)$ denotes the Fourier multiplier by the smooth function $\theta$. Note that the preceding condition is automatically satisfied for any tempered distribution  whose Fourier transform is locally integrable near zero \cite[Example 1 page 22]{bcd11}. 

The homogeneous Besov space $\dot B^s_{p,q}(\mathbb{R}^d)$, for $s\in \mathbb{R}$ and $p, q \in [1,\infty]$,   is defined as the set of all tempered distributions $f$ in $\mathcal{S}'_h(\mathbb{R}^d)$ such that 
\begin{equation*}
	\norm {f}_{\dot B^s_{p,q}(\mathbb{R}^d)} \bydef \left( \sum_{j\in \mathbb{Z}} \left( 2^{js} \| \dot \Delta_j f\|_{L^p(\mathbb{R}^d)} \right)^q \right)^\frac{1}{q}<\infty,
\end{equation*} 
with the standard change of definition in the case $q=\infty$, where $(\dot \Delta_j)_{j\in \mathbb{Z}}$ denotes the usual (homogeneous) dyadic partition of unity, which is made of a family of a rescaled   smooth function supported away from zero. See \cite[Section 2.2]{bcd11} for the precise definition and important properties. We also recall  the identification (in terms of the semi-norms)
\begin{equation*}
	\dot B^s_{2,2}(\mathbb{R}^2) \approx \dot H^s(\mathbb{R}^d), \quad \text{for all} \quad s\in \mathbb{R},
\end{equation*}
which   defines the homogeneous Sobolev space $\dot H^s(\mathbb{R}^d)$ as a particular case of Besov spaces. 
Finally, we   define the Sobolev space $\dot W^ {s,p}(\mathbb{R}^d)$, for $p\in (1,\infty)$ and $s\in (0,\frac{d}{p})$, as the set of tempered distributions $f\in S'(\mathbb{R}^d) $ such that  
\begin{equation*}
	\norm {f}_{\dot W^{s,p}(\mathbb{R}^d)} \bydef \norm {(-\Delta)^{\frac{s}{2}} f}_{L^p(\mathbb{R}^d)}<\infty .
\end{equation*} 

The inhomogeneous Besov space $ B^s_{p,q}(\mathbb{R}^d)$, on the other hand,  is defined in a similar manner and it consists of all tempered distributions $f$ in $\mathcal{S}'(\mathbb{R}^d)$ such that 
\begin{equation*}
	\norm {f}_{ B^s_{p,q}(\mathbb{R}^d)} \bydef \left( \sum_{j\in \mathbb{Z}} \left( 2^{js} \|  \Delta_j f\|_{L^p(\mathbb{R}^d)} \right)^q \right)^\frac{1}{q}<\infty,
\end{equation*} 
with the standard change of definition in the case $q=\infty$, where $( \Delta_j)_{j\in \mathbb{Z}}$ denotes the usual (inhomogeneous) dyadic partition of unity. See \cite[Section 2.2]{bcd11}, again. Finally, we conclude by pointing out that 
\begin{equation*}
	B^s_{p,q}(\mathbb{R}^d) \approx  \dot  B^s_{p,q}(\mathbb{R}^d) \cap L^p(\mathbb{R}^d), 
\end{equation*}
for all $s>0$ and $p,q\in [1,\infty]$.

\subsection{Radon measures and $BV$ spaces}
The set of Radon measures is defined as the dual   space of continuous functions. More precisely, we introduce 
\begin{equation*}
	\mathcal{M}(\mathbb{R}^d) \bydef \left\{ f\in \mathcal{S}' (\mathbb{R}^d): \norm {f}_{\mathcal{M}}< \infty \right\},
\end{equation*}
where 
\begin{equation*}
	\norm {f}_{\mathcal{M}} \bydef \sup_{\overset{\varphi \in C^0(\mathbb{R}^d)}{\norm {\varphi}_{L^\infty }}\leqslant 1} |\langle f,\varphi\rangle |.
\end{equation*}
Moreover, we define the space of functions with bounded variations as 
\begin{equation*}
	BV(\mathbb{R}^d) \bydef \left\{ f\in L^1_{\text{loc}} (\mathbb{R}^d) \quad \text{with} \quad  \nabla f \in \mathcal{M}(\mathbb{R}^d) \right\},
\end{equation*}
equipped with the semi-norm 
\begin{equation*}
	\norm{f}_{BV(\mathbb{R}^d)} \bydef \norm f_{\mathcal {M}(\mathbb{R}^d)}.
\end{equation*}

\subsection{Embeddings}
 Here, we recall some functional inequalities which play a crucial role in our work. We begin with the classical (continuous) Sobolev embeddings, recast in the general context of Besov spaces \cite[Proposition 2.20]{bcd11}
 \begin{equation*} 
 	\dot B^{s}_{p,1} (\mathbb{R}^d) \hookrightarrow \dot B^{s}_{p,q} (\mathbb{R}^d) \hookrightarrow \dot B^{s-d\left (\frac{1}{p}-\frac{1}{r}\right)}_{r,q}(\mathbb{R}^d) \hookrightarrow \dot B^{s-d\left (\frac{1}{p}-\frac{1}{r}\right)}_{r,\infty}(\mathbb{R}^d),
 \end{equation*}
for all $s\in \mathbb{R}$, $1\leqslant p\leqslant r \leqslant \infty$ and any $q\in [1,\infty]$.  Note, moreover, that \begin{equation}\label{Besov:Lebesgue}
	\dot B^{0}_{p,1} (\mathbb{R}^d) \hookrightarrow L^p(\mathbb{R}^d) \hookrightarrow \dot B^{0}_{p,\infty} (\mathbb{R}^d),
\end{equation}
for any $p\in [1,\infty]$.
Another important feature of embeddings of Besov spaces is the gain in terms of the third  index in interpolation inequalities. More precisely, it holds, for any $f\in \dot B^{s_0}_{p,\infty } (\mathbb{R}^d)  \cap \dot B^{s_1}_{p,\infty } (\mathbb{R}^d)  $, that 
\begin{equation}\label{interpolation:1}
	\norm {f}_{\dot B^{s}_{p,1} (\mathbb{R}^d)  } \lesssim  \norm {f}_{\dot B^{s_0}_{p,\infty } (\mathbb{R}^d)  } ^{\theta} \norm {f}_{\dot B^{s_1}_{p,\infty } (\mathbb{R}^d)  }^{1-\theta},
\end{equation}
for any $p\in [1,\infty]$ and real parameters $s_0< s< s_1 $ with $s=\theta s_0 + (1-\theta)s_1$ and $\theta \in (0,1)$.

At last, the bridge between $BV$ and Besov  spaces can be apparent in the following embedding 
\begin{equation*} 
	\norm {f}_{\dot B^1_{1,\infty}(\mathbb{R}^d)} \lesssim \norm {f}_{BV(\mathbb{R}^d)},
\end{equation*}
for all $f\in BV(\mathbb{R}^d)$. This is a direct consequence of the bound \cite[Proposition 2.39]{bcd11}
\begin{equation}\label{Besov::BV}
	\norm {\nabla f}_{\dot B^0_{1,\infty}(\mathbb{R}^d)} \lesssim \norm {\nabla f}_{\mathcal{M}(\mathbb{R}^d)}.
\end{equation}
The justification  of this inequality  can be done by duality, writing \cite[Proposition 2.29]{bcd11}
\begin{equation*}
	\norm {\nabla f }_{\dot B^0_{1,\infty}(\mathbb{R}^d)}  \lesssim \sup_{\overset{\varphi \in \mathcal {S}(\mathbb{R}^d)}{\norm {\varphi}_{\dot {B}^0_{\infty,1}} }\leqslant 1}| \langle \nabla  f, \varphi\rangle|,
\end{equation*}
which, in view of the embedding 
\begin{equation*}
	 \dot {B}^0_{\infty,1}(\mathbb{R}^d) \hookrightarrow C^0 (\mathbb{R}^d) ,
\end{equation*}
yields the desired estimate 
\begin{equation*}
	\norm {\nabla f }_{\dot B^0_{1,\infty}(\mathbb{R}^d)}  \lesssim \sup_{\overset{\varphi \in \mathcal {C}^0(\mathbb{R}^d)}{\norm {\varphi}_{L^\infty }}\leqslant 1}| \langle \nabla  f, \varphi\rangle| = \norm {\nabla f}_{\mathcal{M}(\mathbb{R}^d)}.
\end{equation*}

   Finally, as a by-product of  the preceding interpolation and embedding inequalities, one can show the one dimensional control     \begin{equation}\label{interpolation:L1BV}
  	\norm {f}_{L^1(\mathbb{R})} \lesssim \norm {F}_{L^1(\mathbb{R})}^\frac{1}{2}  \norm {\nabla f}_{L^1 (\mathbb{R})}^\frac{1}{2},
  \end{equation}
  for any $f\in \dot W^{1,1}(\mathbb{R})$ with an anti-derivative $F$
  \begin{equation*}
  	x\mapsto F(x) \bydef \int_{-\infty}^x f(y)\, \ud y \in L^1(\mathbb{R}).
  \end{equation*} 
  This can be established by, first, writing in view of the definition of $F$ and \eqref{Besov:Lebesgue} that   \begin{equation*}
\begin{aligned}
	\norm {f}_{L^1(\mathbb{R})} 
	&\lesssim \norm{\nabla F}_{\dot B^{0}_{1,1}(\mathbb{R})}
	= \norm{F}_{\dot B^{1}_{1,1}(\mathbb{R})}.
	\end{aligned}
\end{equation*}
Then,  employing the interpolation inequality \eqref{interpolation:1} leads to the bound
\begin{equation*}
	\norm{ F}_{\dot B^{1}_{1,1}(\mathbb{R})} \lesssim \norm{ F}_{\dot B^{0}_{1,\infty}(\mathbb{R})}^{\frac{1}{2}} \norm{ F}_{\dot B^{2}_{1,\infty}(\mathbb{R})}^{\frac{1}{2}}.
\end{equation*} 
Therefore, by further appealing to the definition of $F$ and  \eqref{Besov::BV}, we infer that 
\begin{equation*}
	\norm {f}_{L^1(\mathbb{R})} 
	\lesssim \norm{ F}_{\dot B^{1}_{1,1}(\mathbb{R})} \lesssim \norm{ F}_{\mathcal{M}(\mathbb{R})}^{\frac{1}{2}} \norm{ \nabla f}_{\mathcal{M} (\mathbb{R})}^{\frac{1}{2}},
\end{equation*} 
whereby \eqref{interpolation:L1BV} follows.

\subsection*{Acknowledgement} 
The first author is supported by the LABEX MILYON (ANR-10-LABX-0070) of Universit\'e de Lyon, within the program ``Investissements d'Avenir'' (ANR-11-IDEX-0007) operated by the French National Research Agency (ANR). He is also supported by the Unit\'e de Math\'ematiques Pure et Appliqu\'ees, UMPA (CNRS and ENS de Lyon).

\end{document}